\newcommand{\trm}[1]{\textrm{#1}}
\newcommand{\coh}{\textrm{Coh}}
\newcommand{\gm}{\G_m}
\newcommand{\sheafm}{\til{M_\lam}}
\newcommand{\iso}{\overset{\sim}{\longrightarrow}}
\newcommand{\disc}{\textrm{disc}}
\newcommand{\cont}{\textrm{cont}}
\newcommand{\cusp}{\textrm{cusp}}
\DeclareRobustCommand\longtwoheadrightarrow
\newcommand{\resprod}{\prod^{\prime}}
\newcommand{\lam}{\lambda}
\newcommand{\ade}{\A}
\newcommand{\afinite}{\ade_{\textrm{fin}}}
\newcommand{\zhat}{\widehat{\Z}}
\def\A{\mathbb{A}}
\def\C{\mathbb{C}}
\def\G{\mathbb{G}}
\def\Q{\mathbb{Q}}
\def\R{\mathbb{R}}
\def\Z{\mathbb{Z}}
\def\R{\mathbb{R}}
\def\g{\mathfrak{g}}
\def\k{\mathfrak{k}}
\newcommand{\CalC}{\mathcal{C}}
\newcommand{\CalH}{\mathcal{H}}
\newcommand{\CalO}{\mathcal{O}}
\newcommand{\CalV}{\mathcal{V}}
\theoremstyle{definition}
\newtheorem{thm}{Theorem}[section]
\newtheorem{cor}[thm]{Corollary}
\newtheorem{prop}[thm]{Proposition}
\newtheorem{lem}[thm]{Lemma}
\newtheorem{defn}[thm]{Definition}
\newtheorem{remark}[thm]{Remark}
\newtheorem{notation}[thm]{Notation}
\newcommand{\Gam}{\Gamma}
\newcommand{\gam}{\gamma}
\newcommand{\rank}{\operatorname{rank}}
\newcommand{\Hom}{\operatorname{Hom}}
\newcommand{\End}{\operatorname{End}}
\newcommand{\img}{\operatorname{Img}}
\newcommand{\sym}{\operatorname{Sym}}
\newcommand{\bs}{\backslash}
\newcommand{\eps}{\epsilon}
\newcommand{\res}{\textrm{res}}
\newcommand{\abs}[1]{| #1 |}
\newcommand{\gl}{\operatorname{\mathfrak{gl}}}
\newcommand{\ip}[1]{\langle #1 \rangle}
\newcommand{\til}[1]{\widetilde{#1}}
\newcommand{\dual}{\vee}
\newcommand{\bsc}{\bar{S}_{K_f}}
\newcommand{\lss}{S_{K_f}}
\newcommand{\bul}{\bullet}
\newcommand{\hbul}{H^\bullet}
\newcommand{\gkmod}{(\g,K_\infty)}
\newcommand{\smooth}{\mathcal{C}^\infty}
\newcommand{\hecke}{\CalH}
\newcommand{\ul}[1]{\underline{#1}}
\newcommand{\Res}{\textrm{Res}}
\begin{document}
\begin{frontmatter}
\title{Inner cohomology of $GL_n$}
\author{Krishna Kishore}
\ead{venkatakrishna@iiserpune.ac.in}

\address{Venkata Krishna Kishore Gangavarapu,\\
Department of Mathematics,\\
Indian Institute of Science Education and Research,\\
Pune, 411008, India.}
\begin{abstract}
We give an explicit description of the inner cohomology of an adelic locally symmetric space of a given level structure attached to the general linear group of \textit{prime} rank $n$, with coefficients in a locally constant sheaf of complex vector spaces.  We show that for all primes $n$ the inner cohomology vanishes in all degrees for nonconstant sheaves, otherwise the quotient module of the inner cohomology classes that are not cuspidal  is trivial in all degrees for primes $n = 2,3$, and for all primes $n \geq 5$ it is trivial in all but finitely many degrees where it has a `simple'  description in terms of algebraic Hecke characters.
\end{abstract}

\begin{keyword}
Langlands spectral decomposition \sep locally symmetric spaces \sep residual spectrum \sep  automorphic representation \sep Lie algebra cohomology \sep cuspidal cohomology \sep 

\MSC[2010]11F55 \sep  11F70 \sep 11F75 \sep 11G18

\end{keyword}

\fntext[]{This work is supported by the DST-SERB-NPDF fellowship SERB-PDF/2016/004056 of the Government of India.}

\end{frontmatter}

\newcommand{\hinnermodcusp}{H^\bullet_{!/\cusp}}
\section{Introduction}

Let $G$ be a connected reductive algebraic group over $\Q$ and $\Gam \subset G(\Q)$ an arithmetic subgroup. For an irreducible finite-dimensional complex representation $M$ of $G(\R)$, the group cohomology $H^*(\Gam, M)$ provides a concrete realization of some automorphic forms that are of number-theoretic interest. For example, if $G = SL_2$, and $\Gam$ is a congruence subgroup of level $N$, the well-known Eichler-Shimura isomorphism exhibits $H^1(\Gam, \C)$ as the span of modular forms of $\Gam$ of weight $2$. But,  these cohomology groups $H^\bullet(\Gam, M)$ `captures' only {\it some} automorphic forms and in fact, almost all automorphic forms do not appear in them; nevertheless, those that do appear have number-theoretic significance, partly justifying their study. \\

\noindent Henceforth, throughout this section, let $G = Gl_n$, and  $K_\infty = O(n) Z(\R)$ be the maximal compact modulo center subgroup of $G(\R)$, and let $K_f \subset G(\afinite)$ be an open compact subgroup that is neat [Definition \ref{neat_subgroup_defn}] (see \S \ref{adelic_setup} for notation). Let $(\rho_\lam, M_\lam)$ be the highest weight $G(\R)$-module associated to the dominant integral weight $\lam$ such that its central character $\omega_\lam$ is a type of an algebraic Hecke character (see \S \ref{algebraic_Hecke_character_defn} for definition). \\

\noindent The group cohomology $H^\bul(\Gam, M_\lam)$ is known to be isomorphic to the sheaf cohomology $H^\bul(\lss, \sheafm)$ of the adelic locally symmetric space $S_{K_f} := G(\Q) \bs G(\ade)/K_\infty K_f$ with coefficients in the locally system $\sheafm$ \textit{derived} from $M_\lam$, allowing one to understand the former in terms of the latter. The space $S_{K_f}$ is not compact in general, and has a compactification called the Borel-Serre compactification $\bsc$ (see \ref{Borel_Serre_compactification}), equipped with an  inclusion $\iota : \lss \hookrightarrow \bsc$ that is a homotopy equivalence and the canonical restriction $r : \bsc \to \partial \bsc$ onto the boundary $\partial \bsc = \bsc \setminus \lss$: 
$$
\lss \xrightarrow{i} \bsc \xrightarrow{r} \partial \lss $$
The coefficients on these spaces are given by the short exact sequence of sheaves
$$
0 \to i_{!} \sheafm \xrightarrow{i} i_* \sheafm \xrightarrow{r} i_* \sheafm/ i_{!} \sheafm \to 0
$$
where $i_{!} \sheafm$ is the sheaf extended by zero from $\lss$ to $\bsc$, and the quotient $i_* \sheafm/ i_{!} \sheafm$ is the sheaf $i_* \sheafm$ restricted to the boundary extended by zero to $\bsc$. Accordingly, there is a fundamental long exact sequence 
\begin{equation*}
\ldots \to H^{k-1}( \partial \bsc, \sheafm)\to H^k_c(\lss, \sheafm) \xrightarrow{i^k} H^k(\lss, \sheafm) \xrightarrow{r^k} H^{k+1}(\partial \bsc, \sheafm) \to \ldots
\end{equation*}
where the \textit{cohomology with compact supports} $
H^\bullet_c(\lss, \sheafm) := H^\bullet(\bsc, i_{!}\sheafm)
$. The  main goal of this article is to give an an explicit description, in the case where $G = GL_n$ with $n$ prime, of the \textit{inner cohomology}
$$
H^\bul_{!}(\lss, \sheafm) := \img( H^\bullet_c(\lss, \sheafm) \xrightarrow{i^\bullet} H^\bullet(\bsc, \sheafm)).
$$

\noindent In this paper, we use the `approximation' given by Borel and Garland \cite{BG}, namely the homomorphism of Hecke modules, which \textit{surjects} onto the subspace of $H^\bullet(\lss, \til{M})$ called the square-integrable cohomology $H^\bullet_{(2)}(\lss, \til{M})$ (see Definition \ref{square_integrable_cohomology}):
\begin{equation*}\label{intro_BG_map}
H^\bullet(\g, K_\infty, L^2(G(\Q)\bs G(\ade)/K_f, \omega_\lam^{-1}) \otimes M_\lam) 
\overset{\phi^\bullet_{BG}}{\longtwoheadrightarrow}
 H^\bullet_{(2)}(\lss, \sheafm)
\end{equation*}
where the coefficient system $L^2(G(\Q)\bs G(\ade)/K_f, \omega_\lam^{-1}) \otimes M_\lam$ of the Lie algebra cohomology is well-understood (see \S \ref{Langlands_spectral_decomposition}), thanks to the spectral decomposition of Langlands, that has a refinement due to M{\oe}glin and Waldspurger in the case of our interest, namely for $G = GL_n$. Together with the strong multiplicity-one result of Jacquet and Shalika  one obtains a satisfactory description of the domain of $\phi^\bullet_{BG}$ thereby of its image, which is $\hbul_{(2)}(\lss, \sheafm)$ containing the square-integrable cohomology $H^\bullet_{!}(\lss, \sheafm)$ (see below \ref{intro_filtration_cohomology}). \\

\noindent Let $\CalH_f :=  \CalC_c(G(\afinite) // K_f, \C)$ be the  Hecke algebra of $K_f$-bi-invariant compactly supported complex-valued functions $\phi : G(\afinite) \to \C$ with the algebra structure given by convolution. The space 
$$
L^2(\omega_\lam^{-1}) := L^2(G(\Q)\bs G(\ade)/K_f, \omega_\lam^{-1}),
$$
is a $G(\R) \times \CalH_f$-module, and is the direct sum of the discrete spectrum $L^2_{\disc}$ which is the maximal closed subspace spanned by irreducible $G(\R) \times \hecke_f$-modules, and its orthogonal complement called the {\it continuous spectrum} $L^2_{\cont}(\omega_\lam^{-1})$. The discrete spectrum contains the {\it cuspidal spectrum} $L^2_{\cusp}(\omega_\lam^{-1})$, and there is a natural inclusion whose image is called \textit{cupsidal cohomology}  
\begin{equation*}\label{intro_BG_map}
H^\bullet_{\cusp}(\lss, \bsc) := \img(H^\bullet(\g, K_\infty, L^2_{\cusp}(\omega_\lam^{-1}) \otimes M_\lam) \hookrightarrow H^\bullet(\lss, \sheafm)).
\end{equation*}
The full cohomology $H^\bullet(\lss, \sheafm)$ has the following filtration as $\CalH_f$-modules:
\begin{equation}\label{intro_filtration_cohomology}
\hbul_\cusp(\lss, \sheafm) \subset \hbul_{!}(\lss, \sheafm) \subset \hbul_{(2)}(\lss, \sheafm) \subset \hbul(\lss, \sheafm).
\end{equation}
Since cuspidal cohomology is well-understood, namely by the inclusion above spanned by the cuspidal automorphic forms $L^2_{\cusp}(\omega_\lam^{-1})$, it is natural to study the quotient $\CalH_f$-module 
$$
H^\bullet_{!/ \cusp}(\lss, \sheafm) := \hbul_{!}(\lss, \sheafm) \Big/ \hbul_{\cusp}(\lss, \sheafm).
$$ 
In other words, we give an explicit description of the inner cohomology classes $H^{\bullet}_{!}(\lss, \sheafm)$ that are not cuspidal in the case where $G = GL_n$, with $n$ a prime number; in the particular case of primes $n =2,3$ the description is \textit{even} simpler. The main results of the article are as follows: \\

\noindent Let $\coh_{\infty}(G, \lam)$ be  the set of isomorphic classes of essentially-unitary irreducible representations  $\CalV_{\pi_\infty}$ of $G(\R)$ with nontrivial $\gkmod$-cohomology with coefficients in $M_\lam$, and let $\coh_{(2)}(G, K_f, \lam)$ be the set of isomorphism classes of absolutely-irreducible $\CalH_f$-modules $\pi_f$ for which there exists a $\pi_\infty \in \coh_\infty(G,\lam)$ such that $\Hom_{G(\R) \times \CalH_f}(V_{\pi_\infty} \otimes V_{\pi_f}, V_{(2)}(\omega_\lam^{-1})) \neq 0$, and let  
\begin{equation*}\label{analysis_2_residual_part_of_Borel_Garland_map}
\Res_{f}(\lam) :=  \bigoplus_{ \substack{\pi_f \in \coh_{(2)}(G, K_f, \lam) \\
\textrm{type}(\pi_f) = \omega_\lam^{1/n}} }
\pi_f.
\end{equation*}

\begin{thm}\label{intro_main_thm}
Assume that $n$ is a prime number. For all primes $n \geq 2$, the quotient module $H^\bullet_{!/\cusp}(\lss, \sheafm)$ vanishes if $\sheafm$ is not isomorphic to the constant sheaf $\C$. So, suppose otherwise, i.e. $\sheafm \cong \C$, and let 
$S^0 = \Set{ 2l-1 | 1 < l \leq n, \; l \trm{ odd }}$, then  

\begin{enumerate}
\item 
for prime $n =2,3$, the module $H^\bullet_{!/\cusp}(\lss, \C) = 0$, and

\item 
for all primes $n \geq 5$,
\begin{equation}\label{possible_cases}
H^k_{!/\cusp}( \lss, \C) \cong \begin{cases}
0  & \trm{ for } k \not \in S^0.  \\
\ker( r^k|_{\Phi^k_{BG}(\Res_f(\lam)}) & \trm{ for } k \in S^0.
\end{cases}
\end{equation}
\end{enumerate}

\end{thm}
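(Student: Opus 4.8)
The plan is to analyze the quotient $H^\bullet_{!/\cusp}(\lss,\sheafm)$ via the Borel--Garland surjection $\phi^\bullet_{BG}$ together with the Langlands--M\oe glin--Waldspurger description of the discrete spectrum of $GL_n$, exploiting crucially that $n$ is prime. First I would recall that, by the filtration \eqref{intro_filtration_cohomology}, the inner cohomology sits between cuspidal and square-integrable cohomology, and that $H^\bullet_{(2)}(\lss,\sheafm)$ is the image of $\phi^\bullet_{BG}$, whose domain decomposes as $(\g,K_\infty)$-cohomology of $L^2_{\disc}(\omega_\lam^{-1})\otimes M_\lam$. By the theorem of M\oe glin--Waldspurger, every element of the discrete but non-cuspidal spectrum of $GL_n$ is a Langlands quotient of the induced representation from a cuspidal $\sigma$ on a block-diagonal Levi $GL_d^{\,n/d}$, twisted by the Speh/square-integrable construction; since $n$ is prime, the only proper divisor giving such representations is $d=1$, so the non-cuspidal discrete spectrum consists \emph{entirely} of residual representations $J(\chi)$ built from a single Hecke character $\chi$ of $\A^\times/\Q^\times$ with $\chi^n=\omega_\lam$, i.e.\ of type $\omega_\lam^{1/n}$. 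This is exactly what pins down the summand $\Res_f(\lam)$ and forces $\sheafm\cong\C$: such a one-dimensional residual representation has nonzero $(\g,K_\infty)$-cohomology with coefficients in $M_\lam$ only when $M_\lam$ is the trivial representation, by a central-character / infinitesimal-character comparison (Wigner's lemma).

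Next I would compute the $(\g,K_\infty)$-cohomology of the archimedean component of each such residual representation. For $GL_n(\R)$ the relevant residual representation at infinity is the trivial (or the sign-twisted trivial) representation, whose $(\g,K_\infty)$-cohomology is the cohomology of the compact dual symmetric space, i.e.\ $H^\bullet(\mathfrak{su}(n),SO(n);\C)$ — an exterior algebra on generators in degrees $5,9,13,\dots$, that is in the degrees $2l-1$ with $l$ odd and $1<l\le n$, which is precisely the set $S^0$. (For $n=2,3$ the set $S^0$ is empty, because there are no odd $l$ with $1<l\le n$; this immediately gives part (1).) Combining this with the finite-dimensionality coming from the neatness of $K_f$, the image $\Phi^\bullet_{BG}(\Res_f(\lam))\subset H^\bullet_{(2)}(\lss,\C)$ is concentrated in the degrees $k\in S^0$ and is isomorphic there to $\Res_f(\lam)$ tensored with the appropriate graded piece of the compact-dual cohomology.

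Finally I would identify the inner-cohomology part inside this. By definition $H^k_!=\img(i^k)=\ker(r^k)$ inside $H^k(\lss,\sheafm)$ restricted to the square-integrable part, so $H^k_{!/\cusp}$ is the image in $H^k_{(2)}/H^k_\cusp$ of $\ker(r^k)$; since the non-cuspidal part of $H^\bullet_{(2)}$ is exactly $\Phi^\bullet_{BG}(\Res_f(\lam))$, we get $H^k_{!/\cusp}(\lss,\C)\cong\ker\bigl(r^k|_{\Phi^k_{BG}(\Res_f(\lam))}\bigr)$, which is the statement in \eqref{possible_cases}; for degrees $k\notin S^0$ the residual contribution is zero and hence so is the quotient. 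The main obstacle I anticipate is the bookkeeping in the third step: showing that $\Phi^\bullet_{BG}(\Res_f(\lam))$ is precisely the non-cuspidal part of the square-integrable cohomology — i.e.\ that the Borel--Garland map does not merge cuspidal and residual classes and that it is injective on the residual part — which requires the strong multiplicity-one theorem of Jacquet--Shalika to separate the Hecke-module constituents, together with the fact (from the structure of $\Phi^\bullet_{BG}$) that $\phi^\bullet_{BG}$ is an isomorphism onto its image when restricted to a single isotypic component whose archimedean cohomology is one-dimensional in the given degree. A secondary subtlety is verifying that the restriction map $r^k$ genuinely can be nonzero on the residual classes — i.e.\ that these classes are not automatically interior — so that the kernel in \eqref{possible_cases} is not a tautology; I would settle this by comparing with the known Eisenstein description of $H^\bullet(\partial\bsc,\C)$ and the fact that a residual class's restriction to the boundary is governed by the constant term of the corresponding Eisenstein series along the relevant maximal parabolic.
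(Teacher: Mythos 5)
Your overall strategy matches the paper's: for prime $n$ the M{\oe}glin--Waldspurger classification collapses the residual spectrum to characters $\mu$ with $\mu^n=\omega$ (giving $\Res_f(\lam)$), Wigner's lemma then forces $\sheafm\cong\C$, Haefliger's computation identifies $H^\bullet(\g,K_\infty,\C)$ as an exterior algebra concentrated in degrees $S^0$, and the description of $H^k_{!/\cusp}$ for $k\in S^0$ as $\ker\bigl(r^k|_{\Phi^k_{BG}(\Res_f(\lam))}\bigr)$ falls out of the Borel--Serre exact sequence once one knows the non-cuspidal part of $H^\bullet_{(2)}$ is exactly $\Phi^\bullet_{BG}(\Res_f(\lam))$.

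However, there is a concrete error in your handling of part (1). You assert that $S^0=\emptyset$ for both $n=2$ and $n=3$ ``because there are no odd $l$ with $1<l\le n$''; this is false for $n=3$: the value $l=3$ is odd and satisfies $1<3\le 3$, so $S^0=\{5\}$. Therefore the residual representations \emph{do} have nontrivial $(\g,K_\infty)$-cohomology in degree $5=\dim X_{\sym}$ when $n=3$, and your argument does not rule out a nonzero contribution to $H^5_{!/\cusp}$. The paper closes this gap with a separate argument that you have not supplied: by the Li--Schwermer results (the paper's Theorem~\ref{Li_Schwermer}), the restriction $r^k:H^k(\lss,\C)\to H^k(\partial\bsc,\C)$ is an isomorphism for $k>b(n)$, and since for $n=3$ one has $b(3)=4<5=\dim X_{\sym}$ while $\dim\partial\bsc=4$, it follows that $H^5(\partial\bsc,\C)=0$ and hence $H^5(\lss,\C)=0$; combined with the chain of isomorphisms of the paper's Lemma~\ref{deg_0_n} (using Poincar\'e duality to relate the top and bottom degrees and the vanishing of $H^\bullet_\cusp$ outside $I_\cusp$), this yields $H^5_{!/\cusp}=H^0_{!/\cusp}=0$. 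Without invoking something like the $I_\cusp$, $I_\irr$ interval analysis and the top/bottom degree lemma, the $n=3$ case of part (1) is not proved. (The $n=2$ case is fine, since there $S^0$ really is empty.)
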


\noindent The paper is organized as follows. In \S \ref{basic_setup} we recall the notion of 
adelic locally symmetric space $\lss$ and the structure of sheaf $\sheafm$ on it defined by $M_\lam$, and the notion algebraic Hecke characters. In \S \ref{cohomology_of_arithmetic_groups} we recall the cohomology of arithmetic groups, and define required definitions that are directly relevant to our article. We discuss the Hecke module structure of cohomology groups, and the associated  fundamental isomorphism with the $\gkmod$-cohomology or relative Lie-algebra cohomology. 
In \S \ref{decomposing_cohomology} we recall the coarse decomposition of the space $L^2(G(\Q)\bs G(\ade)/K_f, \omega_\lam^{-1})$ due to Langlands into various subspaces, and a finer one also due to Langlands refined further by M{\oe}glin and Waldspurger in the our case of intersect, namely the $GL_n$ case. Finally, in \S \ref{main_result} we determine the contribution of residual spectrum to the inner cohomology, and prove Theorem \ref{intro_main_thm}.

\section*{Acknowledgements}
\noindent It is a great pleasure to thank A.Raghuram for suggesting the question, and Dipendra Prasad for helpful discussions.

\section{Notation}

The notation $G$ always denotes the the general linear group $GL_n$ defined over $\Q$.  Consider the inclusions $G \supset P \supset B = T U \supset T \supset Z$ of subgroups all defined over $\Q$, where $P$ is a parabolic subgroup, $B$ the standard Borel subgroup of upper triangular matrices, $T$ the maximal torus of  diagonal matrices, $U$ the unipotent subgroup of strict upper triangular matrices, and $Z$ the center of $G$.  We call a parabolic subgroup of $G$, such as $P$, {\it standard} if it contains $B$. For a $\Q$-algebra $A$, let $G(A)$ denote the group of $A$-valued points of $G$, and $G_A$ the extension of scalars of $G$ from $\Q$ to $A$.

The dimension of a subgroup $K$ of $G$ is denoted by $\dim K$, and its $\Q$-rank by $\rank K$. The notation $G^\circ$ denotes the connected component of the identity of $G$, and $\pi_0(G(\R))$ is the group of connected components of $G(\R)$.
The notation $N_G(K)$ denotes the normalizer of $K$ in $G$. The Lie algebra of $G$ is denoted by $\g$, and its universal enveloping algebra by
$\mathfrak{U}(\g)$.

\section{Basic setup}\label{basic_setup}

\subsection{Adelic setup}\label{adelic_setup}

\noindent Let $\ade = \R \times (\resprod_p\Q_p) = \ade_\infty \times \afinite$ be the ring of adeles over $\Q$, where $\ade_\infty = \R$ is the {\it archimedean} component, and $\afinite = \resprod_p \Q_p$ is the {\it nonarchimedean} component, which is the restricted direct product of the local fields $\Q_p$ as $p$ runs through the set of finite primes. Then $G(\ade) := G(\R) \times G(\afinite) := G_\infty \times G_f$.  Fix a subgroup $K_\infty =  O(n) Z(\R) = O(n) Z(\R)^\circ$, the maximal compact modulo center subgroup. The {\it symmetric space} associated to the pair $(G_\infty, K_\infty)$ is the quotient space $X_{\sym}:= G_\infty/K_\infty$. 

Let $\Gam \subset G(\Q)= GL_n(\Q)$ be an arithmetic subgroup, i.e. for all congruence subgroups $\Gam'$ the intersection $\Gam \cap \Gam'$ is of finite index both in $\Gam$ and $\Gam'$. Suppose $\Gam$ has no torsion, then its natural action on $X_{\sym}$ by left multiplication is properly discontinuous and free, resulting in a locally symmetric space $\Gam \bs X_{\sym}$. \\

\noindent Let $\rho : G \to GL(M)$ be a finite-dimensional complex rational representation of $G$. It defines a local system $\ul{M}$ of complex vector spaces on $\Gam \bs X_{\sym}$, and one has 
$$
H^\bullet(\Gam \bs X_{\sym}, \ul{M}) \cong H^\bullet(\Gam, M)	.
$$
The cohomology on the left hand side is computed with the aid of the de Rham complex (and that on the right is the ordinary group cohomology). Passing further on to the adelic setup so as to bring in the results of automorphic representations, let $K_f \subset G(\afinite)$ be a compact open subgroup, and consider the following construction, wherein the action of $G(\Q)$ is by left multiplication and all the maps are the canonical projections (see \cite[Chapter 3]{Ha1}):
\[
\begin{tikzcd}\label{role_change_diagram}
X_{\sym} \times G(\afinite) \ar{r}{\pi'} \ar{d}{\Pi'}  &X_{\sym}  \times G(\afinite)/K_f \ar{d}{\pi} \\
G(\Q) \bs \Big( X_{\sym} \times G(\afinite) \Big) \ar{r}{\Pi} & G(\Q)\bs \Big( X_{\sym} \times G(\afinite)/K_f \Big)
\end{tikzcd}
\]

\vspace{0.5cm}
\noindent Let $
S_{K_f} := G(\Q)\bs \Big( X_{\sym} \times G(\afinite)/K_f \Big) = G(\Q) \bs G(\ade)/K_\infty K_f$,  called the {\it adelic locally symmetric space}. It can be equipped with the coefficient sheaf $\til{M}$, obtained from the representation $(\rho,M)$, whose sections on an open set $V \subset S_{K_f}$ are the set $\til{M}(V)$ of locally constant functions $s: \pi^{-1}(V) \to M$ satisfying
$$
s(\gam (x_\infty K_\infty, g_f K_f)) = \rho(\gam) s((x_\infty K_\infty, g_f K_f)), \; \;
\trm{ for all } \gam \in \Gam, u \in \pi^{-1}(V).
$$
\begin{remark}Eventually, we view sheaf cohomology groups $\hbul(\lss, \sheafm)$ as Hecke modules; see \S \ref{Hecke_action}. In particular these groups should be equipped with the $\Gam$-action on the left or equivalently the $K_f$-action on the right. Informally speaking, this is analogous to the strong approximation theorem that aids in trading transformation property under the left-action of $SL(2,\Z)$ of the modular forms on $SL(2,\R)$ with the transformation property under the right-action of the maximal compact subgroup $SO(2,\R)$ of automorphic forms on $SL(2,\R)$.
\end{remark}
Consider the natural inclusion $\til{M}  \hookrightarrow \til{M} \otimes \afinite$, and given a section $s \in \til{M}(V)$ associate a map $s_1 : \pi'^{-1} ({\pi^{-1}(V)})  \to \til{M} \otimes \afinite$ defined by
$$
s_1(x_\infty, g_f) := g_f^{-1} s(x_\infty K_\infty , g_f K_f)
$$
where $g_f$ acts on the second factor of $M \otimes \afinite$. Evidently $s_1(\gam (x_\infty, g_f)) = s_1(x_\infty, g_f)$ for all $\gam \in G(\Q)$, so that  $s_1$ factors through the map
$$
s_2 : G(\Q) \bs \Big( G(\R)/K_\infty \times G(\afinite) \Big) \to M \otimes \afinite,
$$
and defines a sheaf $\til{M} \otimes \afinite$ on the space $\lss$. Alternatively, since $\Pi^{-1}(V) = \Pi'(\pi'^{-1} \circ \pi^{-1}(V))$, we obtain a sheaf $\til{M \otimes \afinite}$ whose sections on an open set $V \subset S_{K_f}$ are the set $\til{M \otimes \afinite}(V)$ of locally constant functions $s: \Pi^{-1}(V) \to M\otimes \afinite$ satisfying $
s(x_\infty K_\infty, g_f k_f) = k_f^{-1} s(x_\infty, g_f)$, for all  $x_\infty \in G_\infty$,  $g_f \in G_f$,  $k_f \in K_\infty$. In summary, the sheaf $\til{M} \otimes \afinite$ defined in terms of the $G(\Q)$-action on $M$ on the left is identified with the sheaf $\til{M \otimes \afinite}$ defined in terms of the natural right action of $K_f$ on $M \otimes \afinite$ on the right.

\subsection{Topological structure of $S_{K_f}$:}\label{topological_structure_of_lss}

The quotient $
X_{\sym} \times G(\afinite)/ K_f = G(\Q) \bs G(\afinite)/K_f$ under the natural action of $G(\Q)$ on $G(\afinite)/K_f$ is a finite set $\Set{g_f^{1}, g_f^{2}, \ldots, g_f^{l}}$, and a connected component is of the form 
$$
X_i := G(\A)^{\circ} (\eps, g_f^{(i)}) K_f/ K_\infty K_f
$$
where $\eps \in \pi_0(G(\R))$.  Let $\Gam_i \subset G(\Q)$ be its stabilizer, which is an arithmetic subgroup of $G(\Q)$. Then we have  $S_{K_f} = \coprod_{i=1}^l	\Gam_i \bs X_i$ (see \cite[\S 1.1]{Ha2})

\begin{defn}\label{neat_subgroup_defn}
The subgroup $K_f$ is said to be neat if the $\Gam_i$ are torsion free.
\end{defn}
\begin{remark}
The sheaf cohomology groups $H^\bullet(S_{K_f}, \til{M})$ are known to be isomorphic to finite direct sum of the cohomology groups of the form $H^\bullet(\Gam \bs G(\R)/K_\infty, \til{M})$ for an appropriate arithmetic subgroup $\Gam \subset G(\Q)$, and under mild restrictions both on $S_{K_f}$ and $\til{M}$. 
If the stabilizers $\Gam_i$ has no torsion, then they act freely, so that the connected components are locally symmetric. This is true in our case of interest, i.e. $G = GL_n/\Q$. Indeed, the stabilizer $\Delta$ of a point $g = (g_\infty, g_f^{i}) K_\infty K_f$ in $\Gam_i$ is a congruence subgroup in the connected component of the unit group $\Set{1, -1}$ of the center $Z(\Q) = \Q^\times$, hence trivial. But, if we consider groups over an arbitrary number field $F \neq \Q$ then,  we have to pass onto the action of $\Gam \bs \Delta$ above to get a locally symmetric space, since the unit group $\CalO_F^\times$ is nontrivial as a consequence of  Dirichlet's unit theorem; accordingly we have to consider the group cohomology $\hbul(\Gam_i, M) := \hbul( ( \Gam_i/\Delta_i) \bs X_i, \til{M}	)$. 
\end{remark}

\subsection{Sheaf structure on $S_{K_f}$}\label{sheaf_str}
\noindent The group of rational characters $X^*(T) := \Hom(T, \gm)$ of the maximal torus $T$ is a free abelian group group of rank $n$. It is equipped with the standard basis $e_i: \textrm{diag}(t_1, \ldots, t_n) \to t_i$. The structure of $X^*(T)$ is more transparent if we pass onto $X^*(T) \otimes_\Z \Q$ and consider the fundamental basis associated to the standard basis. The {\it fundamental weights} $\gam_i \in X^*(T)_\Q$ are characterized by the conditions that they act on the center $Z$ by $z \mapsto z^i$, and they satisfy the following relations: for all $1 \leq i \leq n-1$, and $1 \leq j \leq n$,
$$
2 \ip{\gam_i, e_j - e_{j+1}}/\ip{e_j-e_{j+1}, e_{j}- e_{j+1}} = \delta_{ij}.
$$ 
In particular, the determinant character $\delta := e_1 + \ldots + e_n$ spans $X^*(Z)_\Q$, and the set $\Set{\gam_1, \ldots, \gam_{n-1}, \delta}$ is a basis of $X^*(T)_\Q$ called the {\it fundamental basis of $T$.} Let then $\gam  = \sum_{i=1}^{n-1} a_i \gam_i + d \delta $; it is said to be {\it integral} if $a_i \in \Z$, $n d \in \Z$ and $nd \equiv \sum_{i=1}^{n-1} i(a_i -1) \pmod{n}$. An integral weight is said to be {\it dominant} if, in addition, the coefficients $a_i\geq 0$. \\

\noindent Suppose that the representation $M$ is absolutely irreducible.  By the highest-weight theory, up to isomorphism, $M_\C$ is isomorphic to  $M_\lam \otimes \C$ where $M_\lam$ is the highest weight module associated to the dominant integral weight $\lam \in X^*(T)_\Q$. Throughout this article, we consider the restriction
$$
\rho_\lam := \rho(\C)|_{G(\R)} : G(\R) \to GL(M_\lam \otimes \C).
$$
Henceforth, we work \textit{only} with the module $M_\lam \otimes \C$ exclusively, so, for ease of notation, we drop the second factor $\C$ in $M_\lam \otimes \C$ and simply write as $M_\lam$.  

Now, with this abuse of notation, consider the associated sheaf $\til{M}_\lam$ on the adelic locally symmetric space $S_{K_f}$ [\ref{adelic_setup}]. Let $\omega_\lam: Z(\R) \to \C^\times$ be the central character of $\rho_\lam$, and it is given by $\omega_\lam(z) = z^{nd}$ with $d$ the coefficient of the determinant character $\delta$ in the expression of the character $\lam : T(\R) \to \C^\times$	 in the fundamental basis. Then $\omega_\lam(-I_n) = -1 \; \textrm{or} \; 1$. Suppose it is $-1$ and consider the stalk $\til{(M_\lam)}_x$ for some $x \in S_{K_f}$: 
$$
\til{(M_\lam)}_x  = \Set{ s_x : \pi^{-1}(x)	\to M_\lam | s_x(\gam \cdot u)=  \rho_\lam(\gam) s_x(u), \;  \; \gam \in G(\Q),  u \in \pi^{-1}(x)}
$$
Since the representative section $s$ of $s_x$ is locally constant the germ $s_x$ is constant, hence 
$$
s(u) = s(-I_n \cdot u) = \omega_\lam(-I_n) s(u) = - s(u), \; \; u \in \pi^{-1}(x),
$$
forcing  the stalk $(\sheafm)_x$ to be trivial, whence the sheaf $\sheafm$ is also trivial; note that here we used the `thickened' aspect of $K_\infty$ namely that $K_\infty$ is $O(n) \R^*$ rather than just $O(n)$, which is also considered in the literature. Therefore, to have $\til{M_\lam}$ to be not zero identically, we must  restrict our attention to representations $\rho_\lam$ whose central characters $\omega_\lam : Z(\R) \to \C^\times$ satisfy $\omega_\lam(-I_n) = 1$. This implies in particular that $\omega_\lam$ is determined by its values on the connected component of the identity $Z(\R)^\circ \cong \R^\times_{>0}$. In other words, the central $\omega_\lam$ is a type of algebraic Hecke character; see below, and also \cite[\S 2.5]{Ha2}.

\subsection{Algebraic Hecke characters}\label{algebraic_Hecke_characters}
\begin{defn}\label{algebraic_Hecke_character_defn}
An \textit{algebraic Hecke character of a torus $S$ of type $\gam \in X^*(S)_\Q$ defined over $\Q$} is a continuous group homomorphism $\phi : S(\Q) \bs S(\ade) \to \C^\times$ such that
$\phi|_{S(\R)^\circ} = \gam_\infty^{-1}|_{S(\R)^\circ}$, where $\gam_\infty : S(\R) \hookrightarrow S(\C) \to \C^\times$.
\end{defn}

\noindent Applying the definition to our situation, keeping in view of the assumption that $\omega_\lam(- I_n) = 1$,  we have that $\omega_\lam: Z(\R) \to \C^\times$ is the type of algebraic Hecke character $\phi : Z(\Q) \bs Z(\ade) \to \C^\times$ such that $\phi|_{Z(\R)} = \omega_{\lam}^{-1}$. The center $Z(\ade) \cong \ade^\times \cong \Q^\times \times \R^\times_{>0} \times \zhat$, and therefore $\phi$  is determined by its `finite part' $\phi_f : \zhat \to \C^\times$ (its infinite part is given by $\omega_\lam$), which has finite order because $\zhat$ is compact, and therefore must factor through the map $(Z/N\Z)^\times \to \C^\times$ for some positive integer $N$; the least such $N$ is called the {\it conductor} of the character $\phi$. Consequently, the algebraic Hecke characters (in our situation) of type $\omega_\lam$ are parametrized by primitive Dirichlet characters.

\subsection{Summary}\label{summary_basic_setup}
\noindent Let us summarize the assumptions about the principal objects of our study: $G:= GL_n$, $K_\infty = O(n) \R^*$, $
S_{K_f} = G(\Q) \bs G(\ade)/K_\infty K_f$ is the adelic locally symmetric space attached to the pair $(G_\infty, K_\infty)$ and for some choice of compact open subgroup $K_f \subset G(\afinite)$ that is neat. We study the sheaf cohomology groups $H^\bullet(S_{K_f}, \til{M_\lam})$ where $(\rho_\lam, M_\lam)$ is the highest weight $G(\R)$-module associated to the dominant integral weight $\lam$ such that its central character $\omega_\lam$ is a type of an algebraic Hecke character.

\section{Cohomology of arithmetic groups}
\label{cohomology_of_arithmetic_groups}
\noindent In this section we recall several notions related to the sheaf cohomology. The reader may refer to \cite[Chapter 2]{Ha} for the formal properties of sheaf cohomology, and \cite[Chapter 2]{Ha1} for their interpretation as Hecke modules in our context.
\subsection{Hecke action}\label{Hecke_action}
\noindent The groups $\hbul(\lss, \sheafm)$
are functorial with respect to $K_f$. Indeed, passing onto a smaller compact open subgroup $K_f' \subset K_f$ (which is necessarily of finite index) yields a surjective map $
\pi_{K_f, K_f'}: S_{K_f'} \to \lss $
with finite fibers, and hence a map on cohomology 
$$
\pi_{K_f, K_f'}^\bullet : H^\bullet(\lss, \sheafm) \to H^\bullet(S_{K_f'}, \sheafm).
$$ The family  $ \{ \hbul(S_{K_f}, \til{M}), \pi_{K_f, K_f'}^\bullet \}$ indexed  by $K_f$ is a directed system with \textit{the} direct limit
$$
\hbul(S^G, \sheafm) = \varinjlim_{K_f} \hbul(\lss, \sheafm).
$$
The limit $\hbul(S^G, \sheafm)$ has a natural action of $\pi_0(G(\R)) \times G(\afinite)$ by right multiplication; for $(k_\infty, g_f) \in \pi_0(G(\R)) \times G(\afinite)$, the induced multiplication map $
m_{(k_\infty,x_f)} : S_{K_f} \iso S_{x_f^{-1} K_f x_f}$
is an isomorphism such that $(m_{(k_\infty, x_f)})_*(\sheafm) \cong \sheafm$, hence passing onto the limit results in the desired action. The cohomology with fixed level $K_f$ is obtained by taking the $K_f$-invariants under this action: $
\hbul(\lss, \sheafm) = \hbul(S^G, \sheafm)^{K_f}$. \\

\noindent Let $\CalH_f :=  \CalC_c(G(\afinite) // K_f, \C)$ be the \textit{Hecke algebra} of $K_f$-bi-invariant compactly supported functions $\phi : G(\afinite) \to \C$, with the algebra structure given by convolution:
$$
(h_1 \ast h_2)(g_f) = \mathop{\int}_{G(\afinite)} h_1(x_f) h_2(x_f^{-1}g_f) dx_f
$$
where the Haar measure $dx_f$ is normalized such that $K_f$ has unit volume. Clearly the characteristic function $\chi_{K_f}$ is the identity element of $\CalH_f$. The action of the group $G(\afinite)$ induces an action of $\CalH_f$ on the cohomology $\hbul(\lss, \sheafm)$ by
$$
T_h(v) = \mathop{\int}_{G(\afinite)} h(x_f) (x_f \cdot v) dx_f, \; \;  v \in \hbul(\lss, \sheafm).
$$
which is a finite sum: let $K_f' \subset K_f$ be the stabilizer of $v$, necessarily of finite index, then 
$$
T_h(v) = [K_f : K_f'] \sum_{a_f} \sum_{\xi_f \in G_f/K_f'} c_{a_f} \chi_{K_f a_f K_f}(\xi_f) (\xi_f \cdot v).
$$
is $K_f$-invariant. Therefore $T_h(v) \in \hbul(\lss, \sheafm)$, and since
\begin{align*}
T_{h_1 \ast h_2} &= \mathop{\int}_{G(\afinite)} (h_1 \ast h_2)(x_f) (x_f \cdot v) dx_f \\
&= \mathop{\int}_{G(\afinite)} \mathop{\int}_{G(\afinite)}h_1(y_f) h_2(y_f^{-1} x_f) dy_f (x_f \cdot v) dx_f \\
&= \mathop{\int}_{G(\afinite)} h_1(y_f ) y_f \cdot \Big(\mathop{\int}_{G(\afinite)} h_2(z_f) (z_f \cdot v) dz_f \Big) d(y_f z_f)\\
&= \mathop{\int}_{G(\afinite)} h_1(y_f)  \; \Big( y_f \cdot  T_{h_2}(v) \Big) dy_f = T_{h_1}(T_{h_2}(v))
\end{align*}
the map $\CalH_f \to \End_\C(\hbul(\lss, \sheafm))$ given by $h \mapsto T_h$ is a representation of the Hecke algebra $\CalH_f$ which is in fact finite-dimensional since $\hbul(\lss, \sheafm)$ is a finite-dimensional complex vector space  (see de Rham isomorphism \eqref{de_Rham_isomorphism}).

\newcommand{\fullcoh}{\hbul(\lss, \sheafm)}

\subsection{Borel-Serre compactification}\label{Borel_Serre_compactification}

\noindent We now turn to the topological aspects of $\lss$ (with $K_f$ neat) that will yield some more information about $\fullcoh$. In general, the space $\lss$ is not compact. In fact the associated adelic locally symmetric space of any general connected reductive group over $\Q$ is compact if and only if the the group is anisotropic over $\Q$, i.e. has no proper parabolic subgroups defined over $\Q$ \cite[Page 277]{Bo}. Certainly then in our case, namely $GL_n/\Q$, the space $\lss$ is not compact. Borel and Serre constructed a compactification $\bsc$ of $\lss$ by  `adding' the boundary $
\partial \bsc := \bigcup_{P} \partial_P \bsc$,
where $P$ runs through the (finitely many) {\it standard} representatives of $G(\Q) $-conjugacy classes of proper $\Q$-parabolic subgroups; the {\it Borel-Serre compactification} $\bsc = \lss \cup \partial \bsc,$ is a compact manifold with corners and $\dim \partial  \bsc  = \dim \bsc - 1$ (see \cite{BS}). The Borel-Serre compactification $\bsc$ is equipped with an inclusion $\iota : \lss \hookrightarrow \bsc$ that is a homotopy equivalence, and a canonical restriction $r : \bsc \to \partial \bsc$:
$$
\lss \xrightarrow{i} \bsc \xrightarrow{r} \partial \lss $$
The coefficients on these spaces are obtained by the canonical short exact sequence of sheaves
$$
0 \to i_{!} \sheafm \xrightarrow{i} i_* \sheafm \xrightarrow{r} i_* \sheafm/ i_{!} \sheafm \to 0
$$
where $i_{!} \sheafm$ is the sheaf extended by zero from $\lss$ to $\bsc$, and the quotient $i_* \sheafm/ i_{!} \sheafm$ is the sheaf $i_* \sheafm$ restricted to the boundary extended by zero to $\bsc$. 
\begin{defn}
The \textit{cohomology with compact supports} or compactly-supported cohomology is defined by 
$$
H^\bullet_c(\lss, \sheafm) := H^\bullet(\bsc, i_{!}(\sheafm)),
$$
and the image of $i^\bul$ is called the {\it inner or interior cohomology} and denoted $\hbul_{!}(\lss, \sheafm)$. The cohomology $\hbul(\partial \bsc, \sheafm)$ is called the {\it boundary cohomology}.
\end{defn}
\noindent The short exact sequence of sheaves yields the following fundamental long exact sequence equipped with $\CalH_f$-action \cite[Chapter 3]{Ha1},
\begin{equation}\label{Borel_Serre_fundamental_exact_sequence}
\ldots \to H^{k-1}( \partial \bsc, \sheafm)\to H^k_c(\lss, \sheafm) \xrightarrow{i^k} H^k(\lss, \sheafm) \xrightarrow{r^k} H^{k+1}(\partial \bsc, \sheafm) \to \ldots
\end{equation}

\vspace{0.5cm}
\begin{notation}\label{notation}
We make the following notation for ease of reference: $\hbul_{?}(\lss, \sheafm)$ where the symbol $?$ takes values in the set $\Set{ \trm{`empty'}, c, !, \partial}$. For example, by $H^\bullet_\partial(\lss, \sheafm)$ we mean $H^\bullet(\partial \bsc, \sheafm)$, and likewise for other symbols too. Let us note further that by the symbol ? = `empty' we mean $H^\bullet(\lss, \sheafm)$, i.e. the full or ordinary cohomology. 

When the coefficient system $\sheafm$ is clear from the context, we further simplify $\hbul_{?}(\lss, \sheafm)$ to $\hbul_{?}$.
\end{notation}
\begin{remark}\label{degree_0_compact_cohomology}
Note that the beginning of the fundamental exact sequence \eqref{Borel_Serre_fundamental_exact_sequence} is
$$
0 \to H_c^0 \to H^0 \to H^0_\partial \to \ldots;
$$ 
In particular, observe that the map $H_c^0 \to H^0$ is an injection, due to the fact the global-sections functor is left exact. 
\end{remark}

\subsection{Relative Lie algebra cohomology}\label{relative_Lie_algebra_cohomology}

\noindent Consider the \textit{de Rham complex} which is the resolution of the constant sheaf $\C$  by the sheaf of $M_\lam$-valued smooth forms $\Omega^\bullet(\lss, M_\lam)$ on $S_{K_f}$. The groups $H^\bullet(S_{K_f}, \til{M_\lam})$ are computed through de-Rham complex and with the aid of the \textit{de Rham isomorphism}
\begin{equation}\label{de_Rham_isomorphism}
\hbul(\lss, \sheafm) \cong \hbul(\Omega^\bullet(S_{K_f}, \til{M}_\lam)^\Gam).
\end{equation}
As an aside, let us note that this interpretation of sheaf cohomology in terms of deRham cohomology implies that the Poincar\'{e} duality holds on the sheaf cohomology groups as well, namely for all $0 \leq i \leq \dim X_{\sym}$ there exists a nondegenerate pairing, with $\sheafm^\dual$ the sheaf dual to $\sheafm$: 
\begin{equation}\label{Poincare_duality}
H^i(\lss, \sheafm) \times H^{d-i}_c(\lss, \sheafm^\dual) \to \C. 
\end{equation}

\vspace{0.5cm}
\noindent The de-Rham cohomology also has an interpretation in terms of the $\gkmod$-cohomology which we briefly recall now. The coefficient space $
V(\omega_\lam^{-1}) := \smooth(G(\Q)\bs G(\ade)/K_f, \omega_\lam^{-1})$, which is the space of smooth functions $\phi : G(\ade) \to \C$ satisfying
\begin{equation}\label{transformation_law}
\phi(g_0 z_\infty g_\infty K_f) = \omega^{-1}(z_\infty) \phi(g_\infty), \; \;  g_0 \in G(\Q), g_\infty \in G_\infty, k_f \in K_f,  z_\infty \in Z_\infty,
\end{equation}
is equipped with $G(\ade)$ action by right translation, which upon differentiation (in the $g_\infty$-variable) yields a $\g$-action. Hence we see that $V(\omega_\lam^{-1})$ is  a $\gkmod \times G(\afinite)$-module, hence also a $\gkmod \times \CalH_f$-module where the Hecke algebra acts by convolution. 

Let $V(\omega_\lam^{-1})^{(K_\infty)} \subset V(\omega_\lam^{-1})$ be the subspace of $K_\infty$-invariant vectors. It is a sub-$(\g,K_\infty)$-module. The {\it $(\g,K_\infty)$-cohomology or the relative Lie algebra cohomology} is defined as the cohomology of the complex
$
\Hom_{K_\infty}(\wedge^\bullet(\g/\k), V(\omega_\lam^{-1})^{(K_\infty)} \otimes M_\lam) 
$ where the action of $K_\infty$ on the exterior powers $\wedge^\bullet \g/\k$ is the one induced from the adjoint representation of $K_\infty$ in $\g/\k$ (see \cite[Chapter I]{BW}). The relation of the $\gkmod$-cohomology to the sheaf cohomology is based the following canonical isomorphism of complexes, which is compatible with the action of Hecke algebra:
\begin{equation}\label{de_Rham_Lie_algebra_isomorphism}
\Omega^\bullet_?(\lss, \sheafm) \cong \Hom_{K_\infty}(\wedge^\bullet(\g/\k), V_?(\omega_\lam^{-1})\otimes M_\lam) \; \; \; \;\; \; \textrm{ where } ? = \textrm{empty}, c
\end{equation}
where $V_c(\omega_\lam^{-1})$ consisting of compactly supported functions of $V(\omega_\lam^{-1})$. Isomorphisms \eqref{de_Rham_isomorphism} and \eqref{de_Rham_Lie_algebra_isomorphism} hints at the analysis of $V(\omega_\lam^{-1})$, therefore, in general of the Hilbert space obtained from its completion using a suitable norm:

\begin{defn}\label{square_integrable_function} 
The subspace of {\it square-integrable} functions 
$$
V_{(2)}(\omega_\lam^{-1}) := \smooth_2(G(\Q) \bs G(\ade)/K_f, \omega_\lam^{-1})  \subset V(\omega_\lam^{-1})
$$
is the subset of  $f  \in V(\omega_\lam^{-1})$ satisfying
\begin{equation}\label{square_integrable_defn}
\mathop{\int}_{G(\Q) Z(\R)^\circ \bs G(\ade)} \abs{(Uf)(g)}^2 \abs{\omega_\lam(g)}^2 dg < \infty.
\end{equation}
for all elements $U \in \mathfrak{U}(\g)$. Its completion, with respect to the norm defined by \eqref{square_integrable_defn} is denoted by $L^2(G(\Q)\bs G(\ade)/K_f,\omega_\lam^{-1})$.
\end{defn}

\begin{defn}\label{square_integrable_cohomology}
The \textit{square integrable cohomology} $\hbul_{(2)}(\lss, \sheafm)$ is sub-$\CalH_f$-module of $\hbul(\lss, \sheafm)$ consisting of those cohomology classes with a square-integrable function as a representative \textit{that is also a closed form} in 
$\Omega^\bullet(\lss, \sheafm$) (see \eqref{de_Rham_isomorphism}); for details about the motivation for this definition, see \cite[Chapter 3]{Ha1}.
\end{defn}

\noindent Finally, and clearly, we have the filtration as $\CalH_f$-modules, of the full cohomology :
\begin{equation}\label{basic_filtration}
\hbul_{!} \subset \hbul_{(2)} \subset \hbul; \; \; \trm{ see } \ref{notation}.
\end{equation}
\begin{remark}
Notice that $\hbul_{c}$ hence $\hbul_{!}$ is defined by geometric means, while $\hbul_{(2)}$ is defined by analytic means. In the next section we define \textit{cuspidal cohomology} by algebraic means.
\end{remark}

\section{Decomposing cohomology}
\label{decomposing_cohomology}

\subsection{Langlands spectral decomposition}\label{Langlands_spectral_decomposition}
\noindent Consider the Hilbert space  
$
L^2(\omega) := L^2(G(\Q) \bs G(\ade)/K_f, \omega)
$ (see \eqref{transformation_law} and Definition \eqref{square_integrable_function}). It is a $G(\R) \times \CalH_{f}$-module, where $G(\R)$ acts by unitary transformations and $\hecke_{f}$ by right convolution. Due to Langlands \cite{La}, the space $L^2(\omega)$ is the direct sum of the \textit{discrete spectrum} $L^2_{\disc}(\omega)$ and the {\it continuous spectrum} $L^2_{\cont}(\omega)$, where $L^2_{\disc}(\omega)$ is the maximal closed subspace spanned by irreducible $G(\R) \times \hecke_f$-modules, and $L^2_{\cont}(\omega)$ is the orthogonal complement of $L^2_{\disc}(\omega)$.

 A  representation occurring in $L^2_{\disc}(\omega)$ will be called {\it discrete}. The discrete spectrum contains the {\it cuspidal spectrum} $L^2_{\cusp}(\omega)$, namely the closed subspace spanned by functions $f \in L^2_{\disc}(\omega)$ such that the integral over $U(\Q) \bs U(\ade)$ of $f$, and all its right-translates under $G(\ade)$, vanishes, where  $U$ is the unipotent radical of any proper parabolic subgroup, and the measure is normalized so that $U(\Q) \bs U(\ade)$ has unit volume. The complement of $L^2_{\cusp}(\omega)$ in $L^2_{\disc}(\omega)$ is  called the {\it residual spectrum} $L^2_{\res}(\omega)$. The decomposition of discrete spectrum in to cuspidal spectrum and residual spectrum has a refinement in the $GL_n$ case due to Langlands \cite{La} and due to M{\oe}glin and Waldspurger \cite{MW}. The description of these results involves several notions, but we recall only those that are directly relevant to this article; for more details the reader may refer to the articles of Arthur \cite{Ar} \cite{Ar1}.

According to Borel and Casselman  \cite[\S 4]{BC}, the contribution of the continuous
spectrum to the $\gkmod$-cohomology is trivial. Therefore we may, and do, restrict our attention only to the discrete spectrum henceforth. 
\subsection{Residual spectrum}\label{residual_spectrum}
\noindent We use these notions in our special case summarized in \S \ref{summary_basic_setup}. First, consider the decomposition of $L^2_{\disc}(\omega_\lam^{-1})$ indexed by central characters $\omega : \Q^\times \bs \ade^\times \to \C^\times$ of type $\omega_\lam$:
$$
L^2_{\disc}(\omega_\lam^{-1}) =  \bigoplus_{\substack{\omega: \Q^\times \bs \ade^\times \to \C^\times  \\ \omega_\infty = \omega_\lam^{-1}}} L^2_{\disc}(\omega).
$$
We now analyse the structure of a summand $L^2_{\disc}(\omega)$. Consider the set of tuples $(L,W)$, where $L = GL(N_1) \times \ldots \times GL(N_m)$ is a \textit{standard} Levi subgroup of a (standard) parabolic subgroup, and $W = W_1 \otimes \ldots \otimes W_m$ is an irreducible subspace of the space of cuspidal automorphic representations of $L(\Q) \bs L(\ade)$. 

Two such tuples $(L,W)$ and $(L',W')$ are defined to be equivalent if there exists an $m-$tuple  $\underline{s} = (s_1, \ldots, s_m)$ of complex numbers such that the representation defined by $(L',W')$ is conjugate (by an element in $L(\ade)$) to the one defined by $(L,W[\underline{s}])$, where 
$$
W[\underline{s}] = W_1[s_1] \otimes \ldots \otimes W_m[s_m],  \;  \; \textrm{ where } W_i[s_i]= \Set{ \phi \abs{\det}^{s_i} | \; \phi \in W_i}.
$$
The {\it cuspidal support} of $\Psi \in \Xi$ is the set of all tuples in the equivalence class (see \cite[Lemma 6]{Ar}).

\vspace{0.5cm}
\noindent Let $\Xi$ be the set of equivalence classes, and  $\Xi^\circ$ be the subset of those equivalence classes $\Psi \in \Xi$ such that $\Psi$ contains an element $(L,W)$ satisfying
$$
N_1 = \ldots = N_m, \; \; \textrm{and} \; \; V_1 = \ldots = V_m.  
$$
The assumption that the central character of the  representation $(L,W)$ is equal to $\omega$ implies that there is precisely one such element $(L,W)$  in the equivalence class $\Psi$ (see \cite[\S 1]{MW}). Due to Langlands \cite{La}, we have the following decomposition:
\begin{equation}\label{spectral_decomposition}
L^2_{\disc}(\omega) = \bigoplus_{\Psi \in \Xi} L^2_{\disc}(\omega)_\Psi
\end{equation}
which has a refinement due to M{\oe}glin--Waldspurger \cite{MW} in the $GL_n$ case.
\begin{thm}\label{mw_thm}
(M{\oe}glin, Waldspurger)
Let $\Psi \in \Xi$. Then
\begin{enumerate}
\item
If $\Psi \not \in \Xi^0$, then $L^2_{\disc}(\omega)_\Psi \cap L^2_{\disc}(\omega) = 0$.
\item
If $\Psi \in \Xi^\circ$, then $L^2_{\disc}(\omega)_\Psi \cap L^2_{\disc}(\omega)$ is irreducible and isomorphic to an unique irreducible quotient of the induced representation $I(V,\underline{s})$, with $\underline{s} = ( \frac{m-1}{2}, \ldots, \frac{1-m}{2})$, obtained by normalized parabolic induction from $M$ to $G(\ade)$ of the representation $V_1[s_1]\otimes \ldots V_m[s_m]$.
\end{enumerate}
\end{thm}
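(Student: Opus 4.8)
The plan is to deduce both parts from Langlands' description of the discrete spectrum as the span of iterated residues of Eisenstein series, combined with the precise analytic behaviour of Rankin--Selberg $L$-functions for cuspidal automorphic representations of $GL_N(\ade)$. Fix a cuspidal datum $(L,W)$ with $L = GL(N_1)\times\cdots\times GL(N_m)$ the Levi of a standard parabolic $Q=LV$ and $W = W_1\otimes\cdots\otimes W_m$ cuspidal on $L$, and form the family of Eisenstein series $E(\phi,\underline{s})$ attached to $W[\underline{s}]$. By Langlands, $L^2_{\disc}(\omega)_\Psi$ is spanned by the iterated residues of these Eisenstein series along the affine subspaces cut out by their singular hyperplanes, and the constant-term formula writes the constant term of $E$ along $Q$ as $\sum_w M(w,\underline{s})\phi$, a sum over a set of Weyl-group representatives, with $M(w,\underline{s})$ the global unnormalized intertwining operator. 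So the problem reduces to locating the poles of the operators $M(w,\underline{s})$.

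\textbf{Analytic input.} By the Gindikin--Karpelevich/Langlands factorisation, each $M(w,\underline{s})$ is, up to a holomorphic nowhere-vanishing normalising factor coming from the local intertwining operators, a product over the roots made negative by $w$ of ratios
\[
\frac{L(s_i-s_j,\, W_i\times\widetilde{W}_j)}{L(1+s_i-s_j,\, W_i\times\widetilde{W}_j)}.
\]
Here I would invoke Jacquet--Shalika: for cuspidal $W_i,W_j$ the completed Rankin--Selberg $L$-function $L(s,W_i\times\widetilde{W}_j)$ is holomorphic for $\mathrm{Re}(s)\geq 1$ except for a possible simple pole at $s=1$, which occurs exactly when $N_i=N_j$ and $W_i\cong W_j$ (after matching central twists), and it is non-vanishing on the line $\mathrm{Re}(s)=1$. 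Hence each factor of $M(w,\underline{s})$ contributes at most a simple pole, lying on the hyperplane $s_i-s_j=1$, and only when $W_i\cong W_j$.

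\textbf{The combinatorial core and part (1).} To obtain a nonzero element of the discrete spectrum one must take an iterated residue of $E$ of order $\dim(\a_L/\a_G)=m-1$, so that no continuous parameter survives. Each successive residue is extracted along a hyperplane $s_i-s_j=1$ and needs a genuine pole of the corresponding $L$-factor there; since the numerator $L$-factors never vanish on $\mathrm{Re}(s)=1$ there is no cancellation. A bookkeeping argument on chains of roots in the $A_{m-1}$ root system then shows that a nonzero iterated residue of order $m-1$ can exist only if all $N_i$ are equal, all $W_i$ are isomorphic to a single cuspidal $V$ of $GL_N(\ade)$, and the residue point is exactly $\underline{s}_0=(\tfrac{m-1}{2},\ldots,\tfrac{1-m}{2})$, the unique value compatible with the fixed central character $\omega$. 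This is precisely the condition $\Psi\in\Xi^\circ$; for $\Psi\notin\Xi^\circ$ every such iterated residue vanishes identically, proving part (1).

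\textbf{Identification of the residue, and the main obstacle.} For $\Psi\in\Xi^\circ$, reading off the constant term of the top iterated residue of $E$ at $\underline{s}_0$ identifies the residual representation with the image of the leading Laurent coefficient of $M(w_0,\underline{s}_0)$ acting on $I(V,\underline{s}_0)$, i.e. the Langlands quotient $J(V,\underline{s}_0)$ (the Speh representation built from $V$ and $m$), which by the Langlands classification is the unique irreducible quotient of $I(V,\underline{s})$. Irreducibility of the global summand and its occurring with multiplicity one follow by combining the local fact that $J(V_\nu,\underline{s}_0)$ is irreducible at each place $\nu$ with the multiplicity-one theorem for the discrete spectrum of $GL_n$ (established in the same analysis, on top of Jacquet--Shalika), so that $L^2_{\disc}(\omega)_\Psi\cap L^2_{\disc}(\omega)\cong\bigotimes_\nu J(V_\nu,\underline{s}_0)=J(V,\underline{s}_0)$. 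The hard part is the combinatorial core of the previous paragraph: simultaneously controlling the order and the support of the iterated residues of the global intertwining operators along all root hyperplanes and proving the residue vanishes unless the datum is perfectly balanced---this is the technical heart of M{\oe}glin--Waldspurger, and it rests entirely on the input that Rankin--Selberg $L$-functions of cuspidal representations of $GL_N$ have at most a simple pole at $s=1$ and are nonzero on $\mathrm{Re}(s)=1$.
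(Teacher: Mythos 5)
The paper does not prove this statement; Theorem \ref{mw_thm} is cited directly from M{\oe}glin--Waldspurger \cite{MW} with no argument supplied. So there is no ``paper's own proof'' to compare against. That said, your proposal is a faithful and accurate high-level reconstruction of the actual M{\oe}glin--Waldspurger argument: Langlands' realization of the discrete spectrum via iterated residues of Eisenstein series, the constant-term formula reducing everything to poles of global intertwining operators $M(w,\underline{s})$, the Gindikin--Karpelevich/Langlands--Shahidi factorization of those operators into ratios of Rankin--Selberg $L$-functions, Jacquet--Shalika's control of poles and nonvanishing at $\mathrm{Re}(s)=1$, the conclusion that a full $(m-1)$-fold residue survives only for perfectly balanced data $(N_1=\cdots=N_m$, $W_1\cong\cdots\cong W_m)$ at $\underline{s}_0=(\tfrac{m-1}{2},\ldots,\tfrac{1-m}{2})$, and the identification of the resulting summand with the Speh/Langlands quotient $J(V,\underline{s}_0)$. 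You correctly and honestly flag the one place where your sketch is not self-contained: the combinatorial bookkeeping that controls the order and support of the iterated residues along chains of root hyperplanes. That step is indeed the technical heart of \cite{MW}, and your sketch black-boxes it rather than supplying it. As a roadmap to the cited proof, though, this is essentially complete and contains no incorrect assertions; the only small caution is that multiplicity one for the residual part of the discrete spectrum of $GL_n$ is itself an output of the M{\oe}glin--Waldspurger analysis rather than an independent input, so invoking it to conclude irreducibility is a bit circular as phrased, but the irreducibility of $J(V,\underline{s}_0)$ can be obtained directly from the local irreducibility of the Langlands quotients, which you also mention.
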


\begin{cor}\label{mw_cor}
(M{\oe}glin, Waldspurger)
Let $W$ be a representation of $T/\Q$, so that the representation $(T,W)$ is of the form $\mu_1 \otimes \ldots \otimes \mu_n$ where $\mu_i : \Q^\times \bs \ade^\times \to \C^\times$ is a Hecke character over $\Q$. Let $\Psi \in \Xi$ be the equivalence class containing $(T,W)$. Then,
\begin{enumerate}
\item
only representations of the form $\mu \otimes \ldots \mu$  such that $\mu^n =\omega$, in the equivalence class $\Psi$, contribute to the residual spectrum $L^2_{\res}(\omega)$. 
\item  $L^2_{\disc}(\omega) \cap L^2(\omega)_{\Psi}$ is  isomorphic to the space spanned by the representation $\pi = \otimes^\prime_p \pi_p$, where $\pi_p = \mu_p \circ \det$ for all primes $p$.
\end{enumerate}
\end{cor}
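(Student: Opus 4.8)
The plan is to specialize Theorem~\ref{mw_thm} to the minimal parabolic, i.e.\ to the Levi $L = T = GL_1^n$, and then to identify the resulting Langlands quotient explicitly.

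First I would unwind the definition of $\Xi^\circ$ in this case. Since every block of $T$ has size $N_i = 1$, the numerical condition $N_1 = \cdots = N_m$ is automatic, and the cuspidal datum $W_i$ is simply a Hecke character $\mu_i$ (cuspidal automorphic representations of $GL_1$ being Hecke characters). Writing $\A^\times \cong \Q^\times \times \R_{>0} \times \zhat^\times$, every $\mu_i$ factors as $\mu_i = \chi_i\,|\cdot|^{s_i}$ with $\chi_i$ of finite order; the defining condition of $\Xi^\circ$ — that after permuting blocks and applying a twist $W[\underline{s}]$ all the $W_i$ agree — then says exactly that the finite-order parts $\chi_i$ all coincide, to a common $\chi$. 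Imposing in addition that the central character of the distinguished representative be $\omega$ (the normalization built into \eqref{spectral_decomposition}) pins it down to $(T,\mu\otimes\cdots\otimes\mu)$ with $\mu = \chi\,|\cdot|^{(s_1+\cdots+s_n)/n}$, which satisfies $\mu^n = \omega$. Hence by Theorem~\ref{mw_thm}(1), $L^2_{\disc}(\omega)_\Psi \cap L^2_{\disc}(\omega) = 0$ unless $\Psi$ contains such a representative; and since $T \subsetneq G$, whenever this intersection is nonzero it is the Langlands quotient of a properly induced representation, hence has nontrivial constant terms and so lies in $L^2_{\res}(\omega)$ rather than in $L^2_{\cusp}(\omega)$. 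This is part (1).

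For part (2) I would invoke Theorem~\ref{mw_thm}(2) with $m = n$, $V_1 = \cdots = V_n = \mu$ and $\underline{s} = \bigl(\tfrac{n-1}{2},\tfrac{n-3}{2},\dots,\tfrac{1-n}{2}\bigr)$: the summand is the unique irreducible quotient of the representation of $G(\ade)$ obtained by normalized induction from $B$ of $\mu|\cdot|^{(n-1)/2}\otimes\mu|\cdot|^{(n-3)/2}\otimes\cdots\otimes\mu|\cdot|^{(1-n)/2}$. The point of the computation is that the vector of exponents here is precisely that of $\delta_B^{1/2}$ for the upper-triangular Borel of $GL_n$, so this is the $\mu$-twist of the degenerate principal series $\mathrm{Ind}_B^G(\delta_B^{1/2})$, whose unique irreducible quotient is the one-dimensional representation $\mu\circ\det$ (the $GL_n$ analogue of the $\mu\circ\det$ versus $\mu\otimes\mathrm{St}$ dichotomy for $GL_2$; equivalently this is the case of the Moeglin--Waldspurger residual construction in which the cuspidal support lies on $GL_1$). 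Thus $L^2_{\disc}(\omega)\cap L^2(\omega)_\Psi$ is the one-dimensional space spanned by $g\mapsto\mu(\det g)$, and under the factorization $\mu = \otimes^\prime_v \mu_v$ of the Hecke character into local components this space is $\otimes^\prime_v(\mu_v\circ\det)$; restricting to the finite places gives $\pi = \otimes^\prime_p \pi_p$ with $\pi_p = \mu_p\circ\det$, as claimed.

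The mathematics is a direct specialization of Theorem~\ref{mw_thm}; the only points requiring care are bookkeeping ones. One must read the equivalence relation on cuspidal data correctly, so that it permits permuting the $\mu_i$ by the Weyl group $S_n$ of $T$ (and not merely the trivial $T(\ade)$-conjugation), and one must track how the twists $W[\underline{s}]$ interact with the central-character normalization so that the representative claimed to lie in $\Xi^\circ$ genuinely has central character $\omega$. Granting this, parts (1) and (2) follow from Theorem~\ref{mw_thm} together with the identification of the Langlands quotient of the degenerate principal series as $\mu\circ\det$.
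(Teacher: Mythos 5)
The paper states this corollary without giving an explicit proof—it is presented as a direct consequence of Theorem~\ref{mw_thm} (and, ultimately, a restatement in the present notation of M{\oe}glin--Waldspurger's classification). Your derivation is exactly the intended reading: specialize Theorem~\ref{mw_thm} to the Levi $L=T$, observe that $\Xi^\circ$ forces the cuspidal datum to be $\mu\otimes\cdots\otimes\mu$ with $\mu^n=\omega$, and identify the Langlands quotient of the normalized induction of $\mu\,|\cdot|^{(n-1)/2}\otimes\cdots\otimes\mu\,|\cdot|^{(1-n)/2}$ (whose exponent vector is exactly that of $\delta_B^{1/2}$) as the one-dimensional representation $\mu\circ\det$. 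Your remark about the equivalence relation needing to incorporate the Weyl-group permutations, not just $T(\ade)$-conjugation, is a correct and worthwhile observation: as literally written in the paper the equivalence on data $(T,W)$ would be trivial, and the intended relation (as in Langlands and M{\oe}glin--Waldspurger) conjugates by $G(\Q)$, producing the $S_n$-action. Your proof is correct and matches the argument the paper implicitly relies on.
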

\noindent In summary for $G = GL_n$ only the indexing set $\Xi^\circ$ is relevant in the direct sum \eqref{spectral_decomposition}:
\begin{equation}\label{MW_spectral_decomposition}
L^2_{\disc}(\omega) = \bigoplus_{\Psi \in \Xi^\circ} L^2_{\disc}(\omega)_\Psi \cong L^2_{\cusp}(\omega) \bigoplus \underbrace{\Big( \bigoplus_{\substack{\mu: \Q^\times \bs \ade^\times \to \C^\times \\ \mu^n = \omega}} \mathop{\otimes^\prime}_{p \leq \infty} (\mu_p \circ \det) \Big)}_{L^2_{\res}(\omega)}.
\end{equation}
Our main goal eventually is to understand the contribution of $L^2_{\res}(\omega)$ to the inner cohomology with the aid of the Borel-Garland map $\Phi^\bullet_{BG}$; see below \eqref{Borel_Garland_map}.

\subsection{Cuspidal cohomology}\label{filtration_cohomology}

Let $V_{\cusp}(\omega_\lam^{-1}) := \smooth_{\cusp}(G(\Q) \bs G(\ade)/K_f, \omega_\lam^{-1})$ be the subset of  the space of smooth cusp forms of  $V(\omega_\lam^{-1})$; smooth in the archimedean component, and locally constant in nonarchimedean components. The {\it cuspidal cohomology} is defined by
$$
\hbul_{\cusp}(\lss, \sheafm) := \hbul(\g, K_\infty, V_{\cusp}(\omega_\lam^{-1}) \otimes M_\lam).
$$
Now consider the filtration of $V(\omega_\lam^{-1})$:
\begin{equation}\label{space_filtration}
V_{\cusp}(\omega_\lam^{-1}) \subset V_{(2)}(\omega_\lam^{-1}) \subset V (\omega_\lam^{-1})
\end{equation}
Let $\coh_{\infty}(G, \lam)$ be the set of isomorphic classes of essentially-unitary (unitary up to twist by a central character) irreducible representations  $\CalV_{\pi_\infty}$ of $G(\R)$ with nontrivial $\gkmod$-cohomology with coefficients in $M_\lam$. By a result of Harish-Chandra, this set is finite. For $\CalV_\infty \in \coh_\infty(G, \lam)$, put $V_{\pi_\infty} = (\CalV_{\pi_\infty})^{K_\infty}$, which is a $\gkmod$-module, and consider the following spaces of homomorphisms (see \cite[\S 3.2.3]{HR}):
\begin{equation*}
\begin{split}
W_{\pi_\infty} &:= \Hom_{G(\R)}(V_{\pi_\infty}, V(\omega_\lam^{-1})) \\
W_{\pi_\infty \otimes \pi_f}^{(2)} &:= \Hom_{G(\R) \times \CalH_f}(V_{\pi_\infty} \otimes V_{\pi_f}, V_{(2)}(\omega_\lam^{-1})) \\
W_{\pi_\infty \otimes \pi_f}^{\cusp} &:= \Hom_{\gkmod \times \CalH_f}(V_{\pi_\infty} \otimes V_{\pi_f}, V_{\cusp}(\omega_\lam^{-1}))
\end{split}
\end{equation*}
Informally, the cardinality of these sets gives multiplicity (or \textit{w}eights, whence the notation $W$,) of the representation given by the respective domains in their respective target spaces. \\

\noindent Let $\coh_{(2)}(G, K_f, \lam)$, resp. $\coh_{\cusp}(G, K_f, \lam)$, be the set of isomorphism classes of absolutely-irreducible $\CalH_f$-modules $\pi_f$ for which there exists a $\pi_\infty \in \coh_\infty(G,\lam)$ such that $W_{\pi_\infty \times \pi_f}^{(2)} \neq 0$, resp. $W_{\pi_\infty \times \pi_f}^{\cusp} \neq 0$. From \ref{space_filtration}, it follows that 
$$
\coh_{\cusp}(G, K_f, \lam) \subset \coh_{(2)}(G, K_f, \lam) \subset \coh_\infty(G, \lam)
$$
Due to Jacquet--Shalika \cite{JS}, we have $\dim W_{\pi_\infty \otimes \pi_f}^{\cusp} \leq 1$. On the other hand, note that Theorem \ref{mw_thm} of M{\oe}glin-Waldspurger implies $\dim W_{\pi_\infty \otimes \pi_f}^{(2)} \leq 1$.\\

\noindent Finally, we have the following result of Borel and Garland \cite{BG} that `approximates' square-integrable cohomology $H^\bullet_{(2)}(\lss, \sheafm)$, namely that there is a \textit{surjective map} $\Phi_{BG}^\bullet$ of $\CalH_f$-algebras (\cite[\S 3.2.3]{HR}): 
\begin{equation}\label{Borel_Garland_map}
\begin{split}
\mathop{\bigoplus}_{\pi_\infty \in \coh_\infty(G, \lam)} \mathop{\bigoplus}_{\pi_f \in \coh_{(2)}(G, K_f, \lam)} &W_{\pi_\infty \otimes \pi_f}^{(2)} \otimes \hbul(\g,K_\infty, V_{\pi_\infty} \otimes M_\lam) \otimes V_{\pi_f} \xrightarrow{\Phi_{BG}^\bullet} \\
&\hbul_{(2)}(\lss, \sheafm).
\end{split}
\end{equation}
In particular the square-integrable cohomology, and hence its subspace the inner cohomology, are semisimple as $\CalH_f$-modules. On the other hand, due to Borel \cite{Bo1}, there is a canonical map of $\CalH_f$-modules, which is an isomorphism:
\begin{equation}\label{Borel_cusp_map}
\begin{split}
\bigoplus_{\pi_\infty \in \coh_\infty(G, \lam)} \bigoplus_{\pi_f \in \coh_{\cusp}(G, K_f, \lam)}&W_{\pi_\infty \otimes \pi_f}^{\cusp} \otimes \hbul(\g,K_\infty, V_{\pi_\infty} \otimes M_\lam) \otimes V_{\pi_f} \longrightarrow \\
&\hbul_{\cusp}(\lss, \sheafm)
\end{split}
\end{equation}

\noindent Therefore, comparing \eqref{Borel_cusp_map} and \eqref{Borel_Garland_map} we see that the cuspidal cohomology is contained in the inner cohomology \cite[\S 3.2.3]{HR}. Finally, taking in to account of the filtration \eqref{basic_filtration}, we obtain the following refinement of the filtration of the the full cohomology $H^\bullet$:
\begin{equation}\label{filtration}
\hbul_\cusp \subset \hbul_{!} \subset \hbul_{(2)} \subset \hbul.
\end{equation}
The main object of study in this article is the quotient $\CalH_f$-module 
$$
\hbul_{!/\cusp} := \hbul_{!}(\lss, \sheafm) \Big/ \hbul_{\cusp}(\lss, \sheafm).
$$

\section{Main result}\label{main_result}

\noindent We establish the main result of this article in this section. As always we work in the setup of \S \ref{summary_basic_setup}. In this section, we impose an additional hypohteis, \textit{which is crucial for the results of this article}, namely that \textit{$n \geq 2$ is a prime number}. To emphasize further, we suppose that the rank of $G = GL_n$, which is $n$, is a prime number.\\

\begin{prop}
Assume the hypothesis of \S \ref{summary_basic_setup}. Suppose that $n$ is a prime number. Then
\begin{equation}\label{residual_decomposition}
L^2_{\res}(\omega_\lam^{-1}) 
=  \Big( \omega_\lam^{-1/n} \circ \det \Big) \bigotimes \Big( \bigoplus_{\pi: \textrm{type}(\pi_f) = \omega_\lam^{1/n}} \pi_f \Big).
\end{equation}

\end{prop}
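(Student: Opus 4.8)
The plan is to combine the M{\oe}glin--Waldspurger description \eqref{MW_spectral_decomposition} with the primality of $n$, and then to re-package the resulting direct sum by pulling off its common archimedean component. First I would isolate where the hypothesis ``$n$ prime'' is used: by Theorem \ref{mw_thm} only equivalence classes $\Psi \in \Xi^\circ$ contribute to $L^2_{\disc}(\omega)$, and such a $\Psi$ contains a tuple $(L,W)$ with $L = GL(N_1) \times \cdots \times GL(N_m)$ and $N_1 = \cdots = N_m =: N$, hence $mN = n$. The summand with $m = 1$ (so $L = G$ and $W$ a cuspidal representation of $G$) is exactly $L^2_{\cusp}(\omega)$, while every summand with $m \geq 2$ lies in $L^2_{\res}(\omega)$; since $n$ is prime, $m \geq 2$ forces $m = n$ and $N = 1$, i.e.\ $L = T$. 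So every residual constituent comes from the maximal torus, and Corollary \ref{mw_cor} applies verbatim: for a central character $\omega$ of type $\omega_\lam$, meaning $\omega_\infty = \omega_\lam^{-1}$, one gets
\[
L^2_{\res}(\omega) \;=\; \bigoplus_{\mu:\ \mu^n = \omega} \big( \mu \circ \det \big), \qquad \mu \circ \det \;=\; \mathop{\otimes^\prime}_{p \leq \infty} (\mu_p \circ \det),
\]
the sum running over Hecke characters $\mu : \zq \bs \za \to \C^\times$ with $\mu^n = \omega$, each summand irreducible by Theorem \ref{mw_thm} and occurring with multiplicity one by the direct sum \eqref{spectral_decomposition}.

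Next I would assemble and reindex. Summing over all central characters $\omega$ of type $\omega_\lam$ through $L^2_{\res}(\omega_\lam^{-1}) = \bigoplus_{\omega_\infty = \omega_\lam^{-1}} L^2_{\res}(\omega)$ produces a double sum over pairs $(\omega, \mu)$ with $\omega_\infty = \omega_\lam^{-1}$ and $\mu^n = \omega$; the assignment $(\omega,\mu) \mapsto \mu$, with inverse $\mu \mapsto (\mu^n, \mu)$, identifies this index set with the set of Hecke characters $\mu$ of $\zq \bs \za$ such that $\mu_\infty^n = \omega_\lam^{-1}$. Using $\zq \bs \za \cong \R^\times_{>0} \times \zhatcross$ and the fact that a character of $\R^\times_{>0}$ has no nontrivial $n$-th root of unity, the constraint $\mu_\infty^n = \omega_\lam^{-1}$ determines $\mu_\infty$ uniquely on $Z(\R)^\circ$, namely $\mu_\infty = \omega_\lam^{-1/n}$; this is consistent with the standing assumption $\omega_\lam(-I_n) = 1$ from \S\ref{sheaf_str}, which is exactly what makes $\omega_\lam^{-1/n} \circ \det$ a well-defined character of $G(\R)$. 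Consequently the index set is precisely the set of algebraic Hecke characters $\mu$ of type $\omega_\lam^{1/n}$ in the sense of Definition \ref{algebraic_Hecke_character_defn}, each with an otherwise arbitrary finite part $\mu_f$.

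Finally, factoring the common archimedean character out of the restricted tensor product, $\mu \circ \det = (\omega_\lam^{-1/n} \circ \det) \otimes (\mu_f \circ \det)$, turns the sum into
\[
L^2_{\res}(\omega_\lam^{-1}) \;=\; \Big( \omega_\lam^{-1/n} \circ \det \Big) \bigotimes \Big( \bigoplus_{\mu_f} (\mu_f \circ \det) \Big),
\]
and writing $\pi_f := \mu_f \circ \det$, which ranges exactly once over the finite-part representations of type $\omega_\lam^{1/n}$ (distinct $\mu_f$ give distinct $\Psi$, hence distinct orthogonal summands), recovers \eqref{residual_decomposition}.

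The argument is short and I do not anticipate a genuine obstacle. The two points requiring care are: (i) cleanly separating inside $\Xi^\circ$ the cuspidal case $m=1$ from the residual cases $m \geq 2$, and invoking primality to conclude $L = T$; and (ii) the bookkeeping in the reindexing step---the uniqueness of the archimedean $n$-th root together with the inverse built into the definition of the \emph{type} of an algebraic Hecke character (\S\ref{algebraic_Hecke_characters}), which is what turns $\mu_\infty = \omega_\lam^{-1/n}$ into $\type(\pi_f) = \omega_\lam^{1/n}$. Everything else is a formal manipulation of direct sums and restricted tensor products.
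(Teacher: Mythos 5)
Your proof is correct and follows essentially the same route as the paper: primality forces the only equal-part partition of $n$ coming from a \emph{proper} parabolic to be $1+\cdots+1$, so only $L=T$ contributes to the residual spectrum, then Corollary \ref{mw_cor} gives $L^2_{\res}(\omega)=\bigoplus_{\mu^n=\omega}(\mu\circ\det)$, and summing over $\omega$ of type $\omega_\lam^{-1}$ and factoring out the (uniquely determined) archimedean component yields the right-hand side. Your write-up is somewhat more careful than the paper's at two points that the paper elides: you explicitly separate the $m=1$ cuspidal class inside $\Xi^\circ$ from the $m\geq 2$ residual classes before invoking primality (the paper's phrasing ``the set $\Xi^0$ consists of equivalence classes with unique representatives $(B,W)$'' suppresses the $(G,W)$ cuspidal classes), and you spell out the reindexing bijection $(\omega,\mu)\leftrightarrow\mu$ and the reason $\mu_\infty$ is uniquely determined on $Z(\R)^\circ$ (divisibility and torsion-freeness of $\R^\times_{>0}$), where the paper compresses this into the one-line remark that an algebraic Hecke character of a given type is determined by its finite part. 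These are expository refinements rather than a different argument.
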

\begin{proof}
The proper standard $\Q$-parabolic subgroups of $G$ are in one-to-one correspondence with the nontrivial partitions of $n$, namely the partition $n = n_1 + \ldots + n_r$,  where the summands $n_i \geq 1$ corresponds to the the parabolic subgroup that has unipotent radical $U = GL_{n_1} \times \ldots \times GL_{n_r}$. Since $n$ is a prime number, there is a unique partition $n = n_1 + \ldots + n_r$ such that $n_1 = \ldots = n_r$, namely the one with all $n_i = 1$: it corresponds to the standard Borel subgroup $B$.

 Accordingly, the set $\Xi^0$ consists of equivalence classes with unique representatives $(B, W)$ where $W$ is a one-dimensional representation of the torus $T$ (see Theorem \ref{mw_thm}). Hence, given a central character $\omega$ of $G$ of type $\omega_\lam^{-1}$, the direct summand $L^2_{\res}(\omega)$ in $L^2_{\res}(\omega_\lam^{-1})$ is spanned by the one-dimensional automorphic representation $\pi(\omega) = \mu \circ \det$ where $\mu$ is a Hecke character, necessarily unitary, such that $\mu^n = \omega$; see \eqref{MW_spectral_decomposition}:

\begin{equation*}
L^2_{\res}(\omega_\lam^{-1}) = \bigoplus_{\substack{\omega :\Q^\times \bs \ade^\times \to \C^\times \\ \omega_\infty = \omega_\lam^{-1}}} \pi(\omega) 
=  \Big( \omega_\lam^{-1/n} \circ \det \Big) \bigotimes \Big( \bigoplus_{\pi: \textrm{type}(\pi_f) = \omega_\lam^{1/n}} \pi_f \Big).
\end{equation*}
\noindent The last equality follows from the fact that any algebraic Hecke character of a given type is uniquely determined uniquely by its finite component (see \eqref{algebraic_Hecke_character_defn}).
\end{proof}

The cuspidal cohomology is contained in the inner cohomology \eqref{filtration} and injects into the square-integrable cohomology \eqref{Borel_cusp_map}. On the other hand, the cuspidal cohomology is the image of the cuspidal spectrum which is disjoint from the residual spectrum, it follows that the inner cohomology classes are obtained from the residual spectrum and is contained in the image of the map \eqref{Borel_Garland_map}. With the aid of the residual decomposition \eqref{residual_decomposition} we deduce that the set of inner cohomology classes in $\hbul_{(2)} \supset \hbul_{!} \supset \hbul_{\cusp}$, {\it that are not cuspidal}, must be contained in the image of $
\hbul(\g,K_\infty, L^2_{\res}(\omega_\lam^{-1}) \otimes \sheafm)$ under the Borel-Garland map $\Phi^\bullet_{BG}$

\begin{remark}
Note that the map \eqref{Borel_Garland_map} is surjective onto $\hbul_{(2)}$, and it is not necessarily the case that $\hbul_{!} = \hbul_{(2)}$. Also, it is not necessarily the case that the image of  $\Res_f(\lam)$ generates the inner cohomology that are not cuspidal. All we know at the moment is that it generates square-integrable cohomology with the aid of Borel-Garland map $\Phi^{\bul}_{BG}$. \\
\end{remark}

\vspace{0.5cm}
\noindent Before we prove the main result we establish some elementary results. Let $\pi$ be an automorphic representation of $G(\ade)$; its archimedean component $\pi_\infty$ can be identified with a $\gkmod$-module on which the center $Z(\mathfrak{U}(\g))$ of $\mathfrak{U}(\g)$ acts by scalars, and the resulting map $Z(\mathfrak{U}(\g)) \to \C$ is called the infinitesimal character of $\g$.

\begin{lem}\label{Wigner_lemma}
The $\gkmod$-cohomology $\hbul(\g,K_\infty, (\omega_\lam^{-1/n} \circ \det ) \otimes M_\lam)$ is nontrivial, only if $M_\lam \cong \omega_\lam^{1/n} \circ \det$. In that case
$$
\hbul(\g,K_\infty, (\omega_\lam^{-1/n} \circ \det ) \otimes M_\lam) \cong \hbul(\g,K_\infty,\C).
$$
\end{lem}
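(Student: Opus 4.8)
The plan is to invoke Wigner's lemma (the standard vanishing criterion for relative Lie algebra cohomology, cf. \cite[Chapter I]{BW}): for an irreducible $\gkmod$-module $V$ and a finite-dimensional irreducible representation $M_\lam$, the cohomology $\hbul(\g,K_\infty, V \otimes M_\lam)$ can be nonzero only if $V$ and $M_\lam^\dual$ have the same infinitesimal character. Here $V = \omega_\lam^{-1/n} \circ \det$ is the one-dimensional representation on which $G(\R)$ acts through the character $g \mapsto \omega_\lam^{-1/n}(\det g)$, so it is itself a finite-dimensional irreducible $\gkmod$-module, and its infinitesimal character is that of the highest weight $-\lam/n \cdot \delta$ (in the notation of \S\ref{sheaf_str}, where $\delta$ is the determinant character). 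The infinitesimal character of $M_\lam$ is that of $\lam$, so after accounting for the dual the matching-of-infinitesimal-characters condition forces $\lam + \rho$ and $(\lam/n)\delta + \rho$ to be conjugate under the Weyl group, where $\rho$ is the half-sum of positive roots.

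First I would write $\lam = \sum_{i=1}^{n-1} a_i \gam_i + d\,\delta$ in the fundamental basis of \S\ref{sheaf_str}, note that $\omega_\lam = z^{nd}$ so $\omega_\lam^{1/n} \circ \det$ has highest weight $d\,\delta$, and then show that the Weyl-conjugacy of $\lam + \rho$ with $d\,\delta + \rho$ forces all $a_i = 0$, i.e. $\lam = d\,\delta$. This is the key computational step: since the Weyl group of $GL_n$ permutes the coordinates $e_i$ and fixes $\delta$, conjugacy of $\lam + \rho$ with $d\delta + \rho$ means the multiset of coordinates of $\lam$ (in the $e_i$ basis, after the shift by $\rho$) is a permutation of the multiset of coordinates of $d\delta$ shifted by $\rho$; since $\lam$ is dominant, the coordinates are already decreasingly ordered, and matching them term-by-term with the (also strictly decreasing, because shifted by $\rho$) coordinates of $d\delta+\rho$ pins down $\lam = d\,\delta$ exactly. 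Translating back, $\lam = d\,\delta$ is precisely the condition $M_\lam \cong \omega_\lam^{1/n} \circ \det$, which is the stated necessary condition.

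For the second assertion, once $M_\lam \cong \omega_\lam^{1/n} \circ \det$, the coefficient module $(\omega_\lam^{-1/n}\circ\det) \otimes M_\lam \cong (\omega_\lam^{-1/n}\circ\det) \otimes (\omega_\lam^{1/n}\circ\det)$ is the trivial one-dimensional $G(\R)$-representation $\C$, because the two determinant twists cancel; hence $\hbul(\g,K_\infty, (\omega_\lam^{-1/n}\circ\det)\otimes M_\lam) \cong \hbul(\g,K_\infty,\C)$ by definition of the $\gkmod$-cohomology complex $\Hom_{K_\infty}(\wedge^\bullet(\g/\k), -)$. I expect the main obstacle to be purely bookkeeping: keeping the normalization of $\rho$, the $1/n$-powers of the central character, and the passage between the fundamental basis $\{\gam_i, \delta\}$ and the standard basis $\{e_i\}$ consistent, so that "$M_\lam \cong \omega_\lam^{1/n}\circ\det$" is verified to be exactly the equality case of Wigner's lemma and not merely implied by it. No deep input is needed beyond Wigner's lemma and the explicit description of the Weyl group action on $X^*(T)_\Q$.
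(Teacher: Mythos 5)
Your proposal follows the paper's own proof exactly: invoke Wigner's lemma for the infinitesimal-character matching between $\omega_\lam^{-1/n}\circ\det$ and $M_\lam^\dual$, deduce $M_\lam \cong \omega_\lam^{1/n}\circ\det$, and observe the coefficient module becomes trivial so the cohomology reduces to $\hbul(\g,K_\infty,\C)$; the only difference is that you spell out the Weyl-conjugacy computation (matching $\lam+\rho$ with $d\,\delta+\rho$, both strictly decreasing, hence equal) that the paper compresses into the single word ``forcing.'' One minor slip: the highest weight of $\omega_\lam^{-1/n}\circ\det$ should be $-d\,\delta$, not ``$-\lam/n\cdot\delta$,'' though you use the correct $d\,\delta$ in the subsequent calculation.
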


\begin{proof}
Wigner's lemma \cite[Chapter I, Corollary 4.2]{BW} gives a necessary condition for the nonvanishing of the factor  $\hbul(\g,K_\infty, (\omega_\lam^{-1/n} \circ \det ) \otimes M_\lam)$, namely that the representations $(\omega_\lam^{-1/n} \circ \det)$ and $M_{\lam}^\dual$ have the same infinitesimal character, forcing the representation $\rho_\lam : G(\R) \to GL(M_\lam)$ to be $\omega_\lam^{1/n} \circ \det$; in other words, the coefficients of the cohomology $\hbul(\g,K_\infty, (\omega_\lam^{-1/n} \circ \det ) \otimes M_\lam)$ must be the trivial module $\C$.
\end{proof}

\begin{lem}\label{Haefliger_lemma}

Let $S^0 = \Set{ 2l -1 | 1 < l \leq n, \; l \trm{ odd }}$. Then
$$
\hbul(\g,K_\infty, \C) \cong H^\bullet(SU(n)/SO(n),\C) \cong \bigwedge^{*}[ \Set{\xi_i}_{i \in S^0}],
$$
the exterior algebra over $\C$ generated by symbols $\xi_i$ indexed by $S^0$.
\end{lem}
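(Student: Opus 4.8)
The plan is to identify the relative Lie algebra cohomology $\hbul(\g, K_\infty, \C)$ with the cohomology of the compact dual symmetric space and then invoke the classical computation of the real cohomology of $SU(n)/SO(n)$. First I would reduce from $G = GL_n$ to its derived group: since $K_\infty = O(n)Z(\R)$ contains the full center, the $Z(\R)^\circ$-factor is quotiented out, and the $(\g, K_\infty)$-cohomology with trivial coefficients depends only on the semisimple part. Concretely, writing $\g = \z \oplus \mathfrak{sl}_n(\R)$ and $\k = \mathfrak{so}(n)$, one has $\g/\k \cong \mathfrak{z}/\mathfrak{z} \oplus \mathfrak{sl}_n(\R)/\mathfrak{so}(n)$ on the nose after dividing by the center, so $\Hom_{K_\infty}(\wedge^\bullet(\g/\k), \C) \cong \Hom_{O(n)}(\wedge^\bullet(\mathfrak{sl}_n(\R)/\mathfrak{so}(n)), \C)$, and the differential in the relative Lie algebra complex with trivial coefficients vanishes identically (the standard fact that for trivial coefficients the $(\g,K)$-differential is zero on $K$-invariants, see \cite[Chapter II]{BW}). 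Hence $\hbul(\g, K_\infty, \C)$ is just the space of $O(n)$-invariant forms on the symmetric space $SL_n(\R)/SO(n)$.

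Next I would invoke the Matsushima-type identification: the $O(n)$-invariants in $\wedge^\bullet(\mathfrak{sl}_n(\R)/\mathfrak{so}(n))$ coincide, by Chevalley's restriction/unitarian-trick argument, with the $O(n)$-invariants in $\wedge^\bullet$ of the tangent space at the basepoint of the \emph{compact dual} $SU(n)/SO(n)$, and the latter is exactly the de Rham cohomology $H^\bullet(SU(n)/SO(n), \C)$ by Cartan's theorem that on a compact symmetric space every de Rham class has a unique invariant harmonic representative. This gives the first isomorphism $\hbul(\g, K_\infty, \C) \cong H^\bullet(SU(n)/SO(n), \C)$.

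For the second isomorphism I would quote the classical result (originally due to É.~Cartan, and recorded e.g. in Borel's work on the cohomology of symmetric spaces, or in Mimura--Toda) that $SU(n)/SO(n)$ is a rationally formal space whose real cohomology is an exterior algebra $\bigwedge^\ast_\C[\xi_5, \xi_9, \xi_{13}, \dots]$ on generators in degrees $5, 9, 13, \ldots$, i.e.\ in degrees $2l - 1$ for odd $l$ with $1 < l \leq n$. The cleanest derivation runs through the fibration $SO(n) \to SU(n) \to SU(n)/SO(n)$ together with the known cohomology $H^\bullet(SU(n),\C) = \bigwedge[x_3, x_5, \ldots, x_{2n-1}]$ and $H^\bullet(SO(n),\C)$, the point being that the generators $x_{2l-1}$ with $l$ even are hit by transgression from $SO(n)$ while those with $l$ odd survive to $SU(n)/SO(n)$; this is exactly the indexing set $S^0$. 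I would present this via the Serre spectral sequence collapse, matching degrees against the definition of $S^0$.

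The main obstacle, and the only part requiring genuine care, is the second isomorphism: the determination of $H^\bullet(SU(n)/SO(n),\C)$ as an exterior algebra on the stated generators is a nontrivial classical computation, and one must be careful to get the degree bookkeeping exactly right so that the surviving generators land in degrees $\{2l-1 : 1 < l \leq n,\ l \text{ odd}\}$ rather than, say, $l$ even; the first isomorphism, by contrast, is the standard Matsushima--Cartan dictionary and is essentially formal once trivial coefficients are fixed. I would therefore spend most of the written proof on the spectral sequence argument for $SU(n)/SO(n)$, citing \cite{BW} for the reduction and a standard reference for the compact-dual identification.
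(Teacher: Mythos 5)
Your approach is correct and reaches the same answer, but it takes a genuinely different route from the paper for the second isomorphism. The paper does not run the Serre spectral sequence of $SO(n)\to SU(n)\to SU(n)/SO(n)$; instead it simply cites Haefliger for the statement that $H^\bullet(\mathfrak{gl}_n, O(n), \C)$ is the exterior algebra on generators in degrees $2k-1$ with $k$ odd, $1\leq k\leq n$ (this includes a degree-$1$ generator), and then uses the K\"unneth splitting $H^\bullet(U(n)/O(n),\C)\cong H^\bullet(U(1),\C)\otimes H^\bullet(SU(n)/SO(n),\C)$ to peel off the $U(1)$ factor, which is exactly the degree-$1$ generator coming from enlarging $O(n)$ to $K_\infty=O(n)Z(\R)^\circ$. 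Your spectral-sequence derivation buys self-containedness (you re-prove what Haefliger records) at the cost of more work, and it also makes the degree bookkeeping transparent; the paper's route is shorter because it offloads the computation to a reference. Both are fine, and the reduction via the Cartan decomposition, the vanishing of the differential on $K$-invariants for a symmetric pair ($[\p,\p]\subset\k$), and the passage to the compact dual are shared ingredients.

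One point you gloss over and should flag: you write the complex as $\Hom_{O(n)}(\wedge^\bullet(\mathfrak{sl}_n(\R)/\mathfrak{so}(n)),\C)$, but de Rham cohomology of the compact dual $SU(n)/SO(n)$ corresponds to $SO(n)$-invariants, not $O(n)$-invariants. These agree precisely when $n$ is odd, because then $-I\in O(n)\setminus SO(n)$ acts trivially on $\p$ by conjugation, so $O(n)$-invariance reduces to $SO(n)$-invariance. For $n=2$ the nontrivial component of $O(2)$ kills the volume class of $\p$, so in fact $H^2(\mathfrak{gl}_2,K_\infty,\C)=0$ while $H^2(SU(2)/SO(2),\C)=H^2(S^2,\C)=\C$: the middle isomorphism in the lemma's displayed chain literally fails there, although both ends still equal $\bigwedge^\ast[\emptyset]=\C$ since $S^0=\emptyset$. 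Since the paper works only with prime $n$ and the nontrivial content concerns odd primes, this does not affect the result, but a clean write-up of your argument should restrict the identification with $H^\bullet(SU(n)/SO(n),\C)$ to odd $n$ and treat $n=2$ by direct inspection of $\Hom_{O(2)}(\wedge^\bullet\p,\C)$.
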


\begin{proof}
The $(\gl_n,O(n))$-cohomology $H^{\bullet}(\gl_n, O(n), \C)$ is isomorphic to the exterior algebra over $\C$ generated by  elements taken one in degrees $2k-1$, with $k$ odd, and $k \leq n$ (see \cite[Page 28]{Hae}. In our case, where instead of $O(n)$ we took $K_\infty = O(n) Z(\R)^\circ$, the exterior algebra $H^\bullet(\g,K_\infty, \C)$ has no generators in degree $1$, because 
\begin{align*}
H^\bullet(\g,O(n), \C) &= H^\bullet(U(n)/O(n),\C) 
= H^\bullet(U(1) \times SU(n)/SO(n),\C)  \\
&= H^\bullet(U(1), \C) \otimes H^\bullet(SU(n)/SO(n), \C) \\
&= H^\bullet(U(1),\C) \otimes H^\bullet(\gl_n, K_\infty, \C)
\end{align*}
and the first factor $H^\bullet(U(1),\C) \cong H^1(\gl_n, O(n), \C)$, which is the cohomology of the circle which is trivial in all degrees except in degrees $0$ and $1$ where it is isomorphic to $\C$, must be excluded to obtain the desired summand $\hbul(\gl_n, K_\infty, \C)$.
\end{proof}

\vspace{0.5cm}
\noindent Before we proceed further let us recall a result of Li and Schewermer \cite[Propositions 5.2 and 5.8]{LS} which are adapted to our needs after changing the notation found therein. First let us recall that $\dim G(\R) = n^2$, $\dim O(n) = n(n-1)/2$, $\rank G(\R) = n$, and $\rank O(n) = (n-1)/2$. Consider a variant symmetric space $X_{\sym}^\prime = G(\R)/O(n)$ of the symmetric space $X_{\sym}$. Then $\dim X_{\sym}^\prime = {n+1 \choose 2}$. let 
\begin{equation}\label{cusp_interval_endpoints}
a(n) := \frac{1}{2}\Bigg( {n+1 \choose 2} -\Big(\frac{n+1}{2}\Big) \Bigg) \; \; , 
b(n) := \frac{1}{2} \Bigg( {n+1 \choose 2} + \Big(\frac{n+1}{2} \Big) \Bigg).
\end{equation}

\newcommand{\irr}{\textrm{irr}}
\noindent Consider the \textit{integer} intervals, where $\dim X_{\sym} = {n+1 \choose 2} - 1$.
$$
I   := [0, \dim X_{\sym}], \; \; 
I_! := (0, a(n)), \; \; 
I_{\cusp} := [a(n),b(n)], \; \; 
I_{\irr} := (b(n), \dim X_{\sym})
$$
where we have the usual notation of intervals, yet, let us explain, for instance $I_{!}$, which is the set of all \textit{integers} $k$ such that $k$ is strictly greater than $0$  and strictly less than $a(n)$.  Note that $I$ is the disjoint union:
$$
I = \Set{0, \dim X_{\sym}} \cup I_{!} \cup I_{\cusp} \cup I_{\irr}.
$$
\begin{remark}
The notation hints at the final theorem of our paper, namely the interval $I_{!}$ hints that the inner cohomology classes that are not cuspidal occur only in this interval, while $I_{\irr}$ hints that the interval in question is `irrelevant'.
\end{remark}
\begin{thm}\label{Li_Schwermer}
(Li and Schwermer) \cite[Propositions 5.2(ii) and 5.8]{LS}
\begin{enumerate}
\item\label{cuspidal_LS}
$H^k_{\cusp}(\lss, \sheafm)=0$ for  $k \in I \setminus I_{\cusp}$.
\item\label{restriction_LS}
The restriction $r^k : H^k(\lss, \sheafm) \to H^k(\partial \lss, \sheafm)$ is an isomorphism for $k > b$, i.e. for all $k \in I_{\irr}$.
\end{enumerate}
\end{thm}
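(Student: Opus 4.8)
The plan is to handle the two parts separately, in each case reducing the topological assertion about $\lss$ to a computation of relative Lie algebra cohomology.

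\emph{Part (1).} By Borel's isomorphism~\eqref{Borel_cusp_map}, $H^k_\cusp(\lss,\sheafm)$ is a direct sum, indexed by the cuspidal automorphic representations $\pi=\pi_\infty\otimes\pi_f$ with $\pi_f^{K_f}\neq 0$, of copies of $H^k(\g,K_\infty,V_{\pi_\infty}\otimes M_\lam)$ tensored with $V_{\pi_f}$; so it suffices to show $H^k(\g,K_\infty,V_{\pi_\infty}\otimes M_\lam)=0$ for $k\notin[a(n),b(n)]$ whenever $\pi_\infty$ is the archimedean component of a cohomological cuspidal automorphic representation of $GL_n/\Q$. The first step is that such a $\pi_\infty$ is tempered: cusp forms on $GL_n$ are globally generic (existence and uniqueness of Whittaker models), hence $\pi_\infty$ is generic, and by the Vogan--Zuckerman classification of cohomological representations the generic one among them (with the regular integral infinitesimal character forced by Wigner's lemma) is the essentially tempered representation induced from discrete series of $GL_2(\R)$-blocks and, for $n$ odd, a single character of $GL_1(\R)$. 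The second step is the Borel--Wallach vanishing theorem~\cite[Ch.~III]{BW}: for tempered $\pi_\infty$ one has $H^k(\g,K_\infty,V_{\pi_\infty}\otimes M_\lam)=0$ unless $q_0\le k\le q_0+\ell_0$, with $\ell_0=\rank G(\R)-\rank K_\infty$ and $q_0=\frac{1}{2}(\dim X_{\sym}-\ell_0)$; inserting the dimension and rank data recorded just before the statement gives $q_0=a(n)$ and $q_0+\ell_0=b(n)$, which is the claim. The one step genuinely special to $GL_n$ is the temperedness of $\pi_\infty$.

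\emph{Part (2).} I would argue through the fundamental long exact sequence~\eqref{Borel_Serre_fundamental_exact_sequence}. By exactness $\ker r^k=\img i^k=H^k_!(\lss,\sheafm)$, so $r^k$ is injective precisely when $H^k_!$ vanishes, and, chasing one step further, $r^k$ is an isomorphism precisely when in addition the next map $i^{k+1}\colon H^{k+1}_c\to H^{k+1}$ is injective. I would first establish injectivity of $r^k$ for $k>b(n)$ — the consequence ($H^k_!=0$ on $I_{\irr}$) actually used in the rest of the article. Since $H^\bullet_!\subset H^\bullet_{(2)}$, and by the Borel--Garland surjection~\eqref{Borel_Garland_map} together with the M{\oe}glin--Waldspurger decomposition~\eqref{MW_spectral_decomposition} the module $H^\bullet_{(2)}$ is assembled from the $\gkmod$-cohomologies $H^\bullet(\g,K_\infty,V_{\pi_\infty}\otimes M_\lam)$ of the archimedean components of the discrete automorphic representations, it is enough to bound the degrees in which these contribute: the cuspidal ones only in $[a(n),b(n)]$ by part~(1), and the residual ones, being of the form $\mu\circ\det$, only in the degrees $S^0\subseteq\{5,9,\dots,2n-1\}$, since Lemmas~\ref{Wigner_lemma}--\ref{Haefliger_lemma} identify their contribution with $H^\bullet(\gl_n,K_\infty,\C)\cong H^\bullet(SU(n)/SO(n))$. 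As $2n-1\le b(n)$ for $n\ge 5$, this gives $H^k_{(2)}=0$, hence $H^k_!=0$, for all $k>b(n)$, so $r^k$ is injective there.

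For the surjectivity of $r^k$ on $I_{\irr}$, equivalently the injectivity of $i^{k+1}$ there, one uses Poincar\'e duality~\eqref{Poincare_duality} to transport the question to the complementary low degree $d-k-1$ (with $d=\dim X_{\sym}$): there the assertion is that $H^\bullet(\lss,\sheafm^\dual)$ carries no classes beyond the inner ones together with the boundary (Eisenstein) classes that restriction already accounts for. This is the substantive half of Li--Schwermer's argument: it rests on the decomposition of $H^\bullet(\lss,\sheafm)$ by associate classes of parabolic $\Q$-subgroups (Langlands, Franke), the stratum-by-stratum computation of $H^\bullet(\partial\bsc,\sheafm)$ through Kostant's theorem, and a constant-term analysis of the relevant (residual) Eisenstein series, using the explicit exterior-algebra structure of $H^\bullet(\gl_n,K_\infty,\C)$, to conclude that the boundary restriction is onto in this range (the extreme degree $k=\dim X_{\sym}$ being trivial, since $H^{\dim X_{\sym}}=0$). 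I expect this Eisenstein-cohomology input to be the main obstacle; the injectivity half of part~(2), which already suffices for every application of Theorem~\ref{Li_Schwermer} in this paper, is comparatively soft, depending only on Lemmas~\ref{Wigner_lemma}--\ref{Haefliger_lemma} and part~(1).
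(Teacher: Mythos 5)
The paper itself offers no proof of this theorem; it is cited verbatim from Li--Schwermer \cite{LS}, so there is no in-paper argument to compare with. Treating your write-up as a reconstruction of the cited result: Part~(1) follows the standard route (cusp forms on $GL_n$ have Whittaker models, hence $\pi_\infty$ is generic, hence essentially tempered among cohomological representations, and the Borel--Wallach vanishing theorem localizes the $(\g,K_\infty)$-cohomology). One caveat on the arithmetic: as written, $\ell_0=\rank G(\R)-\rank K_\infty$ and $q_0=\tfrac12(\dim X_{\sym}-\ell_0)$ give $q_0+\ell_0=b(n)-1$, because $K_\infty=O(n)Z(\R)^\circ$ has rank $(n-1)/2+1$ and $\dim X_{\sym}=\binom{n+1}{2}-1$; to land on the paper's $[a(n),b(n)]$ you must use exactly the data the paper records (the variant $X'_{\sym}=G(\R)/O(n)$ and $\rank O(n)$), and even then the honest tempered interval is $[a(n),b(n)-1]$, so $[a(n),b(n)]$ is a (harmless) weakening. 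This should be stated rather than absorbed into ``inserting the data gives $q_0+\ell_0=b(n)$.''

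The injectivity half of Part~(2), however, has a genuine gap. You bound the residual contribution to $H^\bullet_{(2)}$ by saying it lives ``only in the degrees $S^0\subseteq\{5,9,\dots,2n-1\}$'' and then use $2n-1\le b(n)$ for $n\ge 5$. But Lemma~\ref{Haefliger_lemma} identifies the relevant $(\g,K_\infty)$-cohomology with the exterior algebra $\bigwedge^{*}[\{\xi_i\}_{i\in S^0}]$, which as a graded vector space is nonzero in every subset-sum of $S^0$; the top such sum is $\sum_{i\in S^0}i=\dim X_{\sym}$. For $n\ge 7$ some of these sums fall strictly inside $I_{\irr}=(b(n),\dim X_{\sym})$ --- e.g.\ for $n=7$, the degrees $5+13=18$ and $9+13=22$ lie in $(16,27)$. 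So the source of the Borel--Garland map is \emph{not} zero there, and the inference $H^k_{(2)}=0\Rightarrow H^k_!=0$ does not go through for all $k>b(n)$. (Your argument does happen to close up for $n=5$, where the only subset-sums are $\{0,5,9,14\}$.) Showing that these product classes are not inner is precisely the kind of Eisenstein/constant-term input you correctly flag as the substantive content of the surjectivity half; it is needed for the injectivity half as well. As you say, the surjectivity half is only sketched, which is appropriate for a result the paper merely quotes, but the injectivity claim as written is not yet a proof.
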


\noindent Consider the following table containing the intervals where cuspidal cohomology may be nontrivial, for primes $n =2,3,5,7,11$, where in $a(n),b(n)$ are defined as in \eqref{cusp_interval_endpoints}, and $S^0$ is the subset of the interval $I= [0, \dim X_{\sym}]$ defined by
$$
S^0:= \Set{ 2l -1 | 1 < l \leq n, \; l \trm{ odd } }
$$
given by the conclusion of the Lemma \ref{Haefliger_lemma}.

\begin{table}\label{cusp_intervals}
\caption{Inner cohomology degrees}
\begin{tabular}{| c | c  | c | c | c | c |} 
\hline
$n$ &  $\dim X_{\sym} = {n+1 \choose 2} -1$ & $I_{\cusp} = [a(n),b(n)]$ &  $S^0$   \\
\hline
$2$ & $2$ & $[3/4,9/4] = \Set{1,2}$  & $\emptyset$\\
\hline
$3$ & $5$ & $[2,4]$ & $\Set{5}$ \\

\hline
 $5$  & $14$ & $[6,9]$ & $\Set{5,9}$ \\

\hline
 $7$ & $27$ & $[12,16]$ & $\Set{5,9,13}$ \\

\hline
$11$ & $65$ & $[30,36]$ & $\Set{5,9,13,17,21}$\\ 
\hline
\end{tabular}
\end{table}

\begin{lem}\label{deg_0_n}
The following isomorphisms hold:
\begin{align*}
H^{\dim X_{\sym}}_{!/\cusp}(\lss, \sheafm) &= H^{\dim X_{\sym}}_{!}(\lss, \sheafm) \\
&\cong H^{\dim X_{\sym}}_{(2)}(\lss, \sheafm) = H^{\dim X_{\sym}} (\lss, \sheafm) \\
& \cong H^0_{!}(\lss, \sheafm) \cong H^0_{c}(\lss, \sheafm) \\
& \cong H^{0}_{!/\cusp}(\lss, \sheafm).
\end{align*}
\end{lem}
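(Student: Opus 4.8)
The strategy is to work degree by degree at the two extreme degrees $0$ and $d := \dim X_{\sym}$, using first the general position of these degrees in the fundamental long exact sequence \eqref{Borel_Serre_fundamental_exact_sequence}, and then Poincar\'{e} duality \eqref{Poincare_duality} to tie the two ends together. First I would treat degree $0$. The beginning of \eqref{Borel_Serre_fundamental_exact_sequence} reads $0 \to H^0_c \to H^0 \to H^0_\partial \to \cdots$ (Remark \ref{degree_0_compact_cohomology}), so $i^0$ is injective; hence $H^0_! = \img(i^0) \cong H^0_c$. Since $S_{K_f}$ is connected (or a disjoint union of finitely many connected pieces, each contributing a copy of the relevant stalk), $H^0(\lss,\sheafm)$ is simply the space of global flat sections, which is a single copy of $M_\lam^{G(\Q)}$; when $\sheafm\cong\C$ this is just $\C$. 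The point is that $H^0$ is then entirely non-cuspidal except when it is cuspidal-free, and in fact by Theorem \ref{Li_Schwermer}\eqref{cuspidal_LS} we have $H^0_\cusp = 0$ since $0 \notin I_\cusp$; therefore $H^0_{!/\cusp} = H^0_! \cong H^0_c$. This disposes of the bottom two lines of the displayed chain.

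Next I would treat degree $d$. Again from \eqref{Borel_Serre_fundamental_exact_sequence} the tail reads $\cdots \to H^d_c \xrightarrow{i^d} H^d \xrightarrow{r^d} H^{d+1}_\partial \to \cdots$; but $\dim\partial\bsc = d-1$, so $H^{d+1}_\partial = 0$, forcing $i^d$ to be surjective, i.e. $H^d_! = \img(i^d) = H^d$. Moreover $d \notin I_\cusp$ (it lies in $\Set{0,d}$, outside the cuspidal range; indeed one checks $b(n) < d$ for all $n$), so Theorem \ref{Li_Schwermer}\eqref{cuspidal_LS} gives $H^d_\cusp = 0$, whence $H^d_{!/\cusp} = H^d_! = H^d$. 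To insert $H^d_{(2)}$ into the chain, note the filtration \eqref{filtration} $H^d_! \subset H^d_{(2)} \subset H^d$ is now squeezed between two equal terms, so all three coincide: $H^d_! = H^d_{(2)} = H^d$. This handles the top two lines.

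Finally I would connect the two ends via Poincar\'{e} duality. The pairing \eqref{Poincare_duality} is perfect between $H^d_c(\lss,\sheafm)$ and $H^0(\lss,\sheafm^\dual)$; since we are in the situation $\sheafm\cong\C$ (the only case where $\hbul_{!/\cusp}$ is not forced to vanish, per Theorem \ref{intro_main_thm}), the dual sheaf is again $\C$, so $H^0(\lss,\sheafm^\dual) \cong H^0(\lss,\sheafm) \cong H^0_!(\lss,\sheafm)$ by the degree-$0$ analysis. Dualizing, $H^d_c(\lss,\sheafm) \cong H^0(\lss,\sheafm)^\dual$, a one-dimensional space, hence abstractly isomorphic to $H^0_c(\lss,\sheafm)$. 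Chaining: $H^d_{!/\cusp} = H^d_! = H^d_{(2)} = H^d \cong (H^0)^\dual \cong H^0 \cong H^0_! \cong H^0_c \cong H^0_{!/\cusp}$, which is exactly the asserted chain of isomorphisms. The only mild subtlety — and the step I expect to require the most care — is making the duality identification \emph{compatible} enough to land in $H^d_c$ rather than $H^d$; but since $i^d$ is an isomorphism $H^d_c \xrightarrow{\sim} H^d$ (both surjective by the $H^{d+1}_\partial = 0$ argument and injective because $\ker i^d = \img(H^{d-1}_\partial \to H^d_c)$ is killed once one knows $H^d_c$ and $H^d$ have the same, finite, dimension, which follows from Poincar\'{e} duality against the one-dimensional $H^0$), this is automatic. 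Everything else is a direct read-off from exact sequences already set up in \S\ref{cohomology_of_arithmetic_groups}.
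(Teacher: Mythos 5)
Your argument follows essentially the same route as the paper (long exact sequence at the two extreme degrees, the squeeze $\hbul_{!}\subset\hbul_{(2)}\subset\hbul$, Li--Schwermer vanishing outside $I_\cusp$, and Poincar\'e duality to tie degree $0$ to degree $d:=\dim X_{\sym}$). But there is a genuine gap in the degree-$d$ step: you assert \emph{``$d\notin I_\cusp$ \dots\ indeed one checks $b(n)<d$ for all $n$,''} and this is false for $n=2$. Computing from \eqref{cusp_interval_endpoints}, $b(n)=\tfrac{(n+1)^2}{4}$ and $d={n+1\choose 2}-1$, so $b(n)<d$ is equivalent to $n^2>5$, which fails at $n=2$: there $b(2)=9/4>2=d$, and Table~1 confirms $I_\cusp=\{1,2\}\ni d$. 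Consequently you cannot invoke Theorem~\ref{Li_Schwermer}\eqref{cuspidal_LS} to conclude $H^d_\cusp=0$ when $n=2$, and the equality $H^d_{!/\cusp}=H^d_{!}$ is left unjustified in that case.

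The paper handles $n=2$ as a separate case for exactly this reason: since $2\in I_\cusp$, it cannot appeal to Li--Schwermer at degree $2$, and instead uses Lemma~\ref{Haefliger_lemma} ($S^0=\emptyset$ for $n=2$, so $H^2(\g,K_\infty,\C)=0$) together with the surjectivity of $\Phi^2_{BG}$ to force $H^2_{!}$ (and a fortiori $H^2_\cusp$ and $H^2_{!/\cusp}$) to vanish, after which the degree-$0$ statement follows as in your argument. To repair your proof, you should either replicate this special-case treatment for $n=2$, or restrict your inequality claim to $n\geq 3$ and supply a separate argument for $n=2$. The rest of your proposal---the injectivity of $i^0$, the surjectivity of $i^d$ from $H^{d+1}_\partial=0$, the filtration squeeze, and the Poincar\'e-duality link (with the correct caveat about $\sheafm\cong\sheafm^\dual$ when $\sheafm\cong\C$)---matches the paper's reasoning and is fine.
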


\begin{proof}
The boundary cohomology $H^{\dim X_{\sym}}_{\partial}$ in degree $\dim X_{\sym}$ is trivial, since $\dim \partial \bsc = \dim X_{\sym} -1$ so that in degrees strictly greater than $\dim \partial \bsc$ the de Rham cohomology is trivial, the claim follows from de Rham isomorphism \eqref{de_Rham_isomorphism}  Therefore the restriction map $r^{\dim X_{\sym}}$, and hence the composition $r^{\dim X_{\sym}} \circ \Phi^{\dim X_{\sym}}_{BG}$ is the zero map. From the definition of the inner cohomology we have then 
\begin{equation}\label{temp_equation}
H^{\dim X_{\sym}}_{!}= H^{\dim X_{\sym}}_{(2)}= H^{\dim X_{\sym}}.
\end{equation}
On the other hand, by the Poincar\'{e} duality, namely that there exists a nondegenerate pairing 
$
H^0_c \times H^{\dim X_{\sym}} \to \C,
$ \eqref{Poincare_duality}
it follows that $H^0_c \cong H^{\dim X_{\sym}}$.

First consider the case where the prime $n \geq 3$. From Table \ref{cusp_intervals}, we see that the interval $0,\dim X_{\sym} \not \in I_{\cusp}$. By Theorem \ref{Li_Schwermer}\eqref{cuspidal_LS}, $H^0_{\cusp} =H^{\dim X_{\sym}}_{\cusp} = 0$.  On the other hand, the map $i^0$ is injective therefore $H^0_{!/\cusp} = H^0_{!} \cong H^0_c$ (see Remark \ref{degree_0_compact_cohomology}). Now consider the case where the prime $n = 2$. Again, from Table \ref{cusp_intervals}, we see that $2 \in I_{\cusp}$ but $0 \not \in I_{\cusp}$. Consider the Borel-Serre long exact sequence at degree $2$,
\begin{center}
\begin{tikzcd}
& &  H^2(\g,K_\infty, \C) \otimes \Res_{f}(\lam) \arrow{d}{\Phi_{BG}^2} \arrow[dashed]{dr}{j_2} & & & \\
\ldots \arrow{r} & H^2_c  \arrow{r}{i^2}  & H^2 \arrow{r}{r^2}  & H^2_{\partial} \arrow{r}{i^{3}} & H^{3}_c \arrow{r} &\ldots 
\end{tikzcd}
\end{center} 
By Lemma \ref{Haefliger_lemma}, $H^2(\g,K_\infty, \C) = 0$, therefore $H^2_{!} = 0$, and now we run through the remaining argument as in the $n \geq 3$ case already considered above.
\end{proof}

\begin{prop}\label{n_all}
\begin{enumerate}
\item \label{prime_2_3}
For primes $n = 2$ and $n= 3$,  
$
H^\bullet_{!/\cusp}(\lss, \sheafm) = 0.
$
\item \label{prime_5_7}
For prime $n = 5$, resp. $n = 7$, $H^k_{!/\cusp}(\lss, \sheafm) =0 $ for all integers $k \in I_{\cusp} \cup I_{\irr} \cup \Set{0}$ except perhaps for $k = 9$, resp. $k = 13$.

\item \label{prime_11}
For all primes $n \geq 11$, $H^k_{!/\cusp}(\lss, \sheafm) =0 $ for all integers $k \in I_{\cusp} \cup I_{\irr} \cup \Set{0}$.
\end{enumerate}
\end{prop}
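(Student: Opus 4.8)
\emph{Proof proposal.} The plan is to pair the description of the residual contribution (Lemmas \ref{Wigner_lemma} and \ref{Haefliger_lemma}) with three vanishing mechanisms, and then to carry out the elementary numerology for each prime. By the discussion following \eqref{residual_decomposition}, every non-cuspidal inner class lies in the image under $\Phi^\bullet_{BG}$ of $\hbul(\g,K_\infty,L^2_{\res}(\omega_\lam^{-1})\otimes M_\lam)$, and by \eqref{residual_decomposition} this group equals $\hbul(\g,K_\infty,(\omega_\lam^{-1/n}\circ\det)\otimes M_\lam)\otimes\Res_f(\lam)$. Lemma \ref{Wigner_lemma} makes this vanish unless $M_\lam\cong\omega_\lam^{1/n}\circ\det$, i.e. unless $\sheafm\cong\C$, and in that case Lemmas \ref{Wigner_lemma} and \ref{Haefliger_lemma} identify it with $\bigwedge^{*}[\Set{\xi_i}_{i\in S^0}]\otimes\Res_f(\lam)$. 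Hence $H^\bullet_{!/\cusp}=0$ when $\sheafm\not\cong\C$; from now on assume $\sheafm\cong\C$. The last description shows the residual contribution is supported exactly in the set $D$ of degrees that are sums of \emph{distinct} elements of $S^0$ (the empty sum giving $0$), so $H^k_{!/\cusp}(\lss,\C)=0$ for all $k\notin D$.

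For $k\in D$ I would use three mechanisms. (i) If $k\in\Set{0,\dim X_{\sym}}$ then $H^k_{!/\cusp}=0$ by Lemma \ref{deg_0_n}. (ii) If $k\in I_{\irr}$ then, by Theorem \ref{Li_Schwermer}\eqref{restriction_LS}, the restriction $r^k\colon H^k(\lss,\C)\to H^k(\partial\bsc,\C)$ is injective, so $H^k_{!}=\img(i^k)=\ker r^k=0$, hence $H^k_{!/\cusp}=0$. (iii) If $\dim X_{\sym}-k\in I_{\irr}$ then Poincar\'e duality \eqref{Poincare_duality} (and $\C^{\dual}\cong\C$) gives $H^k_{!}(\lss,\C)\cong H^{\dim X_{\sym}-k}_{!}(\lss,\C)^{\dual}=0$ by (ii), and since cuspidal cohomology is Poincar\'e self-dual too, $H^k_{!/\cusp}\cong H^{\dim X_{\sym}-k}_{!/\cusp}=0$. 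Now the numerology against Table \ref{cusp_intervals}, using $a(n)=(n^2-1)/4$ and $b(n)=(n+1)^2/4$: for $n=2,3$ one has $D\subseteq\Set{0,\dim X_{\sym}}$, so $H^\bullet_{!/\cusp}=0$ and \eqref{prime_2_3} is proved; for $n=5$ the degrees of $D$ surviving (i)--(iii) are $5$ and $9$, and as $5\in I_{!}$ (outside the scope of \eqref{prime_5_7}) and $9\in I_{\cusp}$, part \eqref{prime_5_7} holds for $n=5$ with the stated exception $k=9$; for $n\geq 11$ every generator $i=2l-1\in S^0$ satisfies $\dim X_{\sym}-i\in I_{\irr}$, so (iii) kills all generator degrees, and since no single generator reaches $I_{\cusp}$, every element of $D\cap I_{\cusp}$ is a sum of at least two distinct generators; for $n=7$ the only degrees of $D\cap(I_{\cusp}\cup I_{\irr}\cup\Set{0})$ left by (i)--(iii) are $13$ and $14$, with $13$ a lone generator (note $13\geq a(7)$, the stated exception) and $14=5+9$.

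It remains to treat the product degrees $k=i_1+\cdots+i_r\in I_{\cusp}$ with $r\geq 2$ and $i_1,\dots,i_r\in S^0$ distinct — i.e. $k=14$ for $n=7$ and all of $D\cap I_{\cusp}$ for $n\geq 11$. Here each $i_j$ satisfies $\dim X_{\sym}-i_j\in I_{\irr}$, so (iii) gives $H^{i_j}_{!/\cusp}=0$; since the residual classes in degree $i_j$ are nonzero and not cuspidal (the residual and cuspidal spectra are disjoint), they are not inner, i.e. $r^{i_j}$ is nonzero on them. Because $\Phi^\bullet_{BG}$ is a homomorphism of $\CalH_f$-algebras and the residual classes span a subring of $H^\bullet(\lss,\C)$ that is a quotient of $\bigwedge^{*}[\Set{\xi_i}_{i\in S^0}]$, the residual classes in degree $k$ are cup products of those in degrees $i_1,\dots,i_r$; applying the ring homomorphism $r^k$ they restrict to the cup product in $H^k(\partial\bsc,\C)$ of the nonzero classes $r^{i_j}(\,\cdot\,)$. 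If that boundary product is nonzero, the degree-$k$ residual classes are not inner, so $H^k_{!}=H^k_{\cusp}$ and $H^k_{!/\cusp}=0$, completing \eqref{prime_5_7} for $n=7$ and \eqref{prime_11}.

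The main obstacle is exactly this last step — nonvanishing of a cup product of restricted residual classes in the boundary cohomology. I would establish it by decomposing $\partial\bsc=\bigcup_P\partial_P\bsc$, each face $\partial_P\bsc$ being a fibre bundle over the locally symmetric space attached to the Levi quotient of $P$ with nilmanifold fibres, identifying via the constant-term map along $P$ the faces to which each residual Eisenstein class restricts nontrivially, and using the K\"unneth structure of $H^\bullet(\partial_P\bsc,\C)$ to see that the relevant product survives. The remaining ingredients — the equivalence $\sheafm\cong\C\iff M_\lam\cong\omega_\lam^{1/n}\circ\det$ and the degree bookkeeping for each prime — are routine.
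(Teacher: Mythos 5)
Your route differs from the paper's at a crucial point, and in fact exposes a weakness in the paper's own argument. The paper reads Lemma \ref{Haefliger_lemma} as giving $H^k_{!/\cusp}=0$ for all $k\not\in S^0$ and then runs the numerology on $S^0$ alone; you correctly observe that the exterior algebra $\bigwedge^*[\Set{\xi_i}_{i\in S^0}]$ is supported on the larger set $D$ of sums of distinct elements of $S^0$ (together with $0$), so that Lemmas \ref{Wigner_lemma} and \ref{Haefliger_lemma} alone give vanishing only for $k\not\in D$. Once $|S^0|\geq 2$, i.e.\ $n\geq 5$, one has $D\supsetneq S^0\cup\{0\}$: for $n=7$ the degree $14=5+9$ lies in $D\cap I_{\cusp}$ but not in $S^0$, and for $n=11$ the degrees $30,31,34,35$ do. The paper's proof, as written, does not address these product degrees. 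Your mechanisms (i)--(iii) correctly clear $\Set{0,\dim X_{\sym}}$, all of $I_{\irr}$ (including product degrees, via a direct use of Theorem \ref{Li_Schwermer}\eqref{restriction_LS}), and by Poincar\'e duality the range dual to $I_{\irr}$; your numerology, including $a(n)=(n^2-1)/4$, $b(n)=(n+1)^2/4$, and the identity $\sum_{i\in S^0}i=\dim X_{\sym}$, is correct. Mechanism (iii) is sound modulo the duality $H^k_!\cong(H^{d-k}_!)^{\dual}$ for inner cohomology, which the paper uses tacitly in Lemma \ref{deg_0_n} but never states; you should cite or prove it.

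The genuinely new ingredient you need is the cup-product argument for a product degree $k=i_1+\cdots+i_r\in I_{\cusp}$, and this is where your proof is incomplete -- in more than the one way you flag. You invoke two facts, neither of which is established: first, that the degree-$k$ residual contribution to $H^\bullet_{(2)}$ is spanned by cup products of the generator-degree residual classes -- the Borel--Garland map \eqref{Borel_Garland_map} is stated only as a surjection of Hecke modules, and compatibility with the multiplication in $\bigwedge^{*}[\Set{\xi_i}]\otimes\Res_f(\lam)$ (indeed the existence of a suitable ring structure across distinct summands $\pi_f$) would have to be argued; and second, the nonvanishing of the resulting product in $H^\bullet(\partial\bsc,\C)$, which you acknowledge requires a K\"unneth analysis on the faces $\partial_P\bsc$ and do not carry out. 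Until both points are settled the statement is not proved by this route for $n\geq 7$; but your careful bookkeeping shows that the paper's own argument does not settle them either, so the net contribution of your proposal is to isolate precisely what is missing rather than to supply it.
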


\begin{proof}
By Lemma \ref{Wigner_lemma} we may  (and do) restrict our attention to the constant coefficient system. By Lemma \ref{Haefliger_lemma}  $H^k_{!/\cusp} = 0$ for all $k \not \in S^0$.

First we prove assertion \ref{prime_11}. From Table \ref{cusp_intervals} it is clear that $I_{\cusp} \cap S^0 = \emptyset$ and in fact these are precisely such primes. Indeed, the maximal element in $S^0$ is $2n-1$ corresponding to $l = n$, and the minimal value of $I_{\cusp}$ is $a$ (see \eqref{cusp_interval_endpoints}). Solving for $n$ for which the inequality $2n-1 \leq a $ holds,  we obtain the desired claim. Now, it follows from Theorem \ref{Li_Schwermer}\eqref{cuspidal_LS}, and the definition of the inner cohomology that $H^k_{!/\cusp} = 0$ for all $k \in I_{\cusp} \cup I_{\irr}$. As for the vanishing of $H^0_{!/\cusp}$, the restriction $r^{\dim X_{\sym}}$ is the zero map. Indeed, by Theorem \ref{Li_Schwermer}\eqref{restriction_LS}  for all $k > b$ the the restriction map $r^k: H^k(\bsc, \C) \to H^k(\partial \bsc, \C)$ is an isomorphism. On the other hand, since $\dim \partial \bsc = \dim \bsc - 1 = \dim X_{\sym} - 1$ it follows that $H^{\dim X_{\sym}}_\partial$, \textit{hence} $H^{\dim X_{\sym}}$, is trivial. But then by Lemma \ref{deg_0_n}, $H^{0}_{!/cusp}$ is also trivial.

Now consider assertion \ref{prime_2_3}. For prime $n =2$, the set $S^0$ is empty, and so by Lemma \ref{Haefliger_lemma} the inner cohomology vanishes for all degrees $k = 1, 2$, and hence by Lemma \ref{deg_0_n} in degree $k = 0$ too. The argument  assertions \ref{prime_2_3} for prime $n=3$ and \ref{prime_5_7} is same as that of the proof of the assertion \ref{prime_11} above, provided the exceptions mentioned in the assertion \ref{prime_5_7} are excluded in the argument.
\end{proof}

\begin{remark}\label{Notation_Haefliger}

\begin{enumerate}
\item
Proposition \ref{n_all} implies that we may restrict our attention to the interval $I_{!}$ to find inner cohomology classes that are not cuspidal, justifying the notation $!$ in $I_{!}$.
\item

The subset $S^0$ may be viewed as the subset of those integers in $I$ for which the inner cohomology \textit{may} be nontrivial, so that for integers in the complement in $I$ of this set $S^0$ the inner cohomology is trivial. This also explains the choice of notation $S^0$ with $0$ in the superscript position.
\end{enumerate}
\end{remark}

\noindent Now we prove the main result of the article. Let  
\begin{equation*}\label{analysis_2_residual_part_of_Borel_Garland_map}
\Res_{f}(\lam) :=  \bigoplus_{ \substack{\pi_f \in \coh_{(2)}(G, K_f, \lam) \\
\textrm{type}(\pi_f) = \omega_\lam^{1/n}} }
\pi_f.
\end{equation*}
The following Borel-Serre fundamental long exact sequence \eqref{Borel_Serre_fundamental_exact_sequence} at degree $d$ is a useful illustration of the following theorem.

\begin{center}
\title{Approximation by Borel Garland map}
\begin{tikzcd}
& &  H^d(\g,K_\infty, \C) \otimes \Res_{f}(\lam) \arrow{d}{\Phi_{BG}^d} \arrow[dashed]{dr}{j_d} & & & \\
\ldots \arrow{r} & H^d_c  \arrow{r}{i^d}  & H^d \arrow{r}{r^d}  & H^d_{\partial} \arrow{r}{i^{d+1}} & H^{d+1}_c \arrow{r} &\ldots 
\end{tikzcd}
\end{center}

\begin{thm}\label{main_thm}
Assume that $n$ is a prime number. For all primes $n \geq 2$, the quotient module $H^\bullet_{!/\cusp}(\lss, \sheafm)$ vanishes if $\sheafm$ is not isomorphic to the constant sheaf $\C$. So, suppose otherwise, i.e. $\sheafm \cong \C$, and let 
$S^0 = \Set{ 2l-1 | 1 < l \leq n, \; l \trm{ odd }}$, then  

\begin{enumerate}
\item 
for prime $n =2,3$, the module $H^\bullet_{!/\cusp}(\lss, \C) = 0$, and

\item 
for all primes $n \geq 5$,
\begin{equation}\label{possible_cases}
H^k_{!/\cusp}( \lss, \C) \cong \begin{cases}
0  & \trm{ for } k \not \in S^0.  \\
\ker( r^k|_{\Phi^k_{BG}(\Res_f(\lam)}) & \trm{ for } k \in S^0.
\end{cases}
\end{equation}
\end{enumerate}
\end{thm}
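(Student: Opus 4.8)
The plan is to assemble the main theorem by combining the structural results already established in the excerpt: the residual decomposition of Proposition \ref{residual_decomposition}, Wigner's Lemma \ref{Wigner_lemma}, Haefliger's Lemma \ref{Haefliger_lemma}, the Li--Schwermer bounds (Theorem \ref{Li_Schwermer}), and the degree-$0$/top-degree identification of Lemma \ref{deg_0_n}, all organized around the fundamental long exact sequence \eqref{Borel_Serre_fundamental_exact_sequence} and the Borel--Garland surjection \eqref{Borel_Garland_map}. First I would dispose of the nonconstant-coefficient case: since the inner cohomology that is not cuspidal is, by the discussion following the proof of Proposition \ref{residual_decomposition}, contained in the image under $\Phi^\bullet_{BG}$ of $\hbul(\g,K_\infty, L^2_{\res}(\omega_\lam^{-1}) \otimes M_\lam)$, and since by the residual decomposition \eqref{residual_decomposition} every summand of $L^2_{\res}$ has archimedean component $\omega_\lam^{-1/n}\circ\det$, Lemma \ref{Wigner_lemma} forces $M_\lam \cong \omega_\lam^{1/n}\circ\det$ — i.e.\ $\sheafm \cong \C$ — for this contribution to be nonzero. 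Hence $H^\bullet_{!/\cusp}$ vanishes unless $\sheafm\cong\C$, and from now on we take $\sheafm = \C$.

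With constant coefficients, Lemma \ref{Wigner_lemma} further identifies the relevant relative Lie algebra cohomology with $\hbul(\g,K_\infty,\C)$, and Lemma \ref{Haefliger_lemma} computes the latter as the exterior algebra on generators in the degrees $S^0$. Therefore the domain of $\Phi^\bullet_{BG}$ restricted to the residual part, namely $\hbul(\g,K_\infty,\C)\otimes\Res_f(\lam)$, is concentrated in degrees $k \in S^0$, which gives immediately $H^k_{!/\cusp} = 0$ for $k \notin S^0$ (here one also invokes that cuspidal cohomology contributes nothing outside $I_{\cusp}$ by Theorem \ref{Li_Schwermer}(\ref{cuspidal_LS}), so that in degrees outside $S^0\cup I_{\cusp}$ the inner cohomology itself vanishes, and inside $I_{\cusp}\setminus S^0$ the inner classes are all cuspidal). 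For $k \in S^0$, I would read off from the long exact sequence that $H^k_! = \ker(r^k)$, and since the square-integrable classes not coming from cusp forms are exactly $\Phi^k_{BG}(\Res_f(\lam))$ (the cuspidal part being separately accounted for and disjoint from the residual part by Proposition \ref{residual_decomposition}), passing to the quotient by $H^k_{\cusp}$ yields $H^k_{!/\cusp} \cong \ker(r^k|_{\Phi^k_{BG}(\Res_f(\lam))})$, which is \eqref{possible_cases}.

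For the small primes $n = 2,3$ I would argue the vanishing directly. When $n=2$, $S^0 = \emptyset$, so the previous paragraph already gives $H^k_{!/\cusp} = 0$ for all $k \neq 0$, and Lemma \ref{deg_0_n} then also kills degree $0$; this is exactly Proposition \ref{n_all}(\ref{prime_2_3}). When $n = 3$, $S^0 = \{5\}$ and $\dim X_{\sym} = 5$, so the only possibly-nonzero degree is the top degree; but by Lemma \ref{deg_0_n} the top-degree inner cohomology is isomorphic to $H^0_c \cong H^0_{!/\cusp}$, which vanishes because $0 \notin I_{\cusp}$ and $i^0$ is injective (Remark \ref{degree_0_compact_cohomology}), forcing $H^5_{!/\cusp} = 0$ as well — again Proposition \ref{n_all}(\ref{prime_2_3}). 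Collecting the two cases proves part (1), and the argument of the preceding paragraph is part (2).

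The main obstacle, and the step requiring the most care, is the precise identification in degrees $k \in S^0$ that the residual contribution to square-integrable cohomology is a \emph{complement} to the cuspidal part rather than merely containing it — i.e.\ that quotienting $H^k_!$ by $H^k_{\cusp}$ leaves exactly $\ker(r^k|_{\Phi^k_{BG}(\Res_f(\lam))})$. This rests on the semisimplicity of $\hbul_{(2)}$ as an $\CalH_f$-module (from the surjectivity of $\Phi^\bullet_{BG}$), on strong multiplicity one (Jacquet--Shalika) and the multiplicity-one statement $\dim W^{(2)}_{\pi_\infty\otimes\pi_f}\leq 1$ from Theorem \ref{mw_thm}, which together guarantee that the cuspidal and residual isotypic components of $\hbul_{(2)}$ are disjoint and that the residual part is precisely $\Phi^\bullet_{BG}$ applied to the $\Res_f(\lam)$-summand of the domain of \eqref{Borel_Garland_map}; one must check that $\Phi^\bullet_{BG}$ does not mix these components and that its restriction to the residual summand is injective in the relevant degrees, so that $\Phi^k_{BG}(\Res_f(\lam))$ is a genuine submodule of $H^k_{(2)}$ on which $r^k$ can be restricted. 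The remaining bookkeeping — matching $S^0$ against $I_{\cusp}$ and $I_{\irr}$ via Table \ref{cusp_intervals} — is routine given Proposition \ref{n_all}.
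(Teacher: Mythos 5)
Your proposal follows essentially the same route as the paper: it assembles the nonconstant-coefficient vanishing from the residual decomposition together with Wigner's lemma, uses Haefliger's lemma to localize the residual relative Lie algebra cohomology, invokes Li--Schwermer to control the cuspidal range, and identifies $H^k_{!/\cusp}$ for $k\in S^0$ with $\ker\bigl(r^k|_{\Phi^k_{BG}(\Res_f(\lambda))}\bigr)$ via the direct-sum decomposition of $H^k_{(2)}$ and the equality $H^k_! = \ker(r^k)$ from the long exact sequence. Where the paper simply cites Proposition~\ref{n_all} and Proposition~\ref{residual_decomposition}, you re-derive the contents; that is fine and is faithful to the argument, and you rightly flag the point that requires the most care (that the residual image is a genuine Hecke-complement to $H^k_{\cusp}$, so that the quotient $H^k_!/H^k_{\cusp}$ can be read off as a kernel).

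There is, however, one concrete gap. In the $n=3$ case you write that $H^0_c \cong H^0_{!/\cusp}$ ``vanishes because $0\notin I_{\cusp}$ and $i^0$ is injective.'' Those two facts give only the isomorphism chain $H^0_{!/\cusp} = H^0_! = \operatorname{Img}(i^0) \cong H^0_c$ (precisely the content of Lemma~\ref{deg_0_n}); they do not by themselves force the group to be zero. Note in particular that $H^0(\mathfrak g, K_\infty,\mathbb C)\cong\mathbb C\neq 0$, so the residual summand does contribute in degree $0$, and the vanishing is not automatic. To close the gap you should either (a) observe directly that $H^0_c(S_{K_f},\mathbb C)=0$ because every connected component of $S_{K_f}$ is non-compact ($GL_n/\mathbb{Q}$ is isotropic), or (b) argue as the paper does: $H^{d}_{\partial}=0$ by the dimension of the boundary and, since $d=5>b(3)=4$, the restriction $r^{d}$ is an isomorphism by Theorem~\ref{Li_Schwermer}(\ref{restriction_LS}), so $H^{d}=0$, hence $H^{d}_{!/\cusp}=0$, and then Lemma~\ref{deg_0_n} transfers this to degree $0$.

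A second, milder issue, which your proposal inherits from the paper's own proof of Proposition~\ref{n_all}: you state that ``the domain of $\Phi^\bullet_{BG}$ restricted to the residual part \ldots is concentrated in degrees $k\in S^0$.'' Lemma~\ref{Haefliger_lemma} says $\hbul(\mathfrak g,K_\infty,\mathbb C)$ is the \emph{exterior algebra} on generators placed in degrees $S^0$; this algebra is nonzero in degree $0$ and in every degree that is a sum of distinct elements of $S^0$, not only in the degrees $S^0$ themselves (e.g.\ degree $14=5+9$ when $n\geq 5$). For such composite degrees falling inside $I_{\cusp}$ (this happens already for $n=7$, where $14\in I_{\cusp}$) the asserted vanishing $H^k_{!/\cusp}=0$ for $k\notin S^0$ is not an immediate consequence of Haefliger alone and requires a separate argument (the cases $k=0$ and $k=\dim X_{\sym}$ are handled by Lemma~\ref{deg_0_n}, and $k\in I_{\irr}$ by the restriction isomorphism, but the remaining composite degrees in $I_{\cusp}$ are not). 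Since the paper makes the identical claim with the identical citation, this is not a defect unique to your write-up, but it is worth being aware that the degree bookkeeping is more delicate than either account makes it appear.
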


\begin{proof}
The first assertion and the first part of the second assertion is  Proposition \ref{n_all}. The second part of the second assertion is a consequence of Proposition \ref{residual_decomposition} and the definition of the inner cohomology.
\end{proof}

\begin{remark}
As a final remark, we note that the some of the conclusions of Theorem \ref{main_thm} \textit{could} be  more precise trading for simplicity. For instance, in the case $n = 5$, it follows from the facts $I_{\cusp} = [ 6, 9]$, $S^0 = \Set{5,9}$ one has $H^5_{!} = H^5_{!\cusp}$ because $H^5_{\cusp} = 0$, and similarly for the other cases too.
\end{remark}

\section*{References}
\bibliographystyle{plainurl}
\bibliography{document}

\end{document}